\def\A{{\mathcal A}}
\def\o{\omega}
\def\th{\theta}
\def\N{\mathbb{N}}
\def\T{\mathbb{T}}
\def\Z{\mathbb{Z}}
\def\B{{\mathcal B}}
\def\F{\mathcal F}
\def\H{\mathcal H}
\def\K{\mathscr K}
\def\M{{\mathcal M}}
\def\L{\mathcal L}
\def\U{\mathcal U}
\def\si{\sigma}
\def\Si{\Sigma}
\def\d{\mathrm{d}}
\def\Aut{\mathfrak{Aut}}
\def\G{\mathsf{G}}
\def\p{\parallel}
\def\<{\langle}
\def\>{\rangle}
\def\({\left(}
\def\){\right)}
\def\[{\left[}
\def\[{\right]}
\newtheorem{Theorem}{Theorem}[section]
\newtheorem{Remark}[Theorem]{Remark}
\newtheorem{Lemma}[Theorem]{Lemma}
\newtheorem{Corollary}[Theorem]{Corollary}
\newtheorem{Proposition}[Theorem]{Proposition}
\newtheorem{Definition}[Theorem]{Definition}
\newtheorem{Example}[Theorem]{Example}
\numberwithin{equation}{section}
\begin{document}


\title{Symmetry and Inverse Closedness for Some\\ Banach $^*$-Algebras Associated to Discrete Groups}

\date{\today}

\author{M. M\u antoiu \footnote{
\textbf{2010 Mathematics Subject Classification: Primary 47L65, Secundary 22D15, 47D34, 43A20.}
\newline
\textbf{Key Words:}  Discrete group, crossed product, kernel, symmetric Banach algebra, weight.}
}
\date{\small}
\maketitle \vspace{-1cm}


\begin{abstract}
A discrete group $\G$ is called rigidly symmetric if for every $C^*$-algebra $\A$ the projective tensor product $\ell^1(\G)\widehat\otimes\A$ is a symmetric Banach $^*$-algebra. For such a group we show that the twisted crossed product $\ell^1_{\alpha,\o}(\G;\A)$ is also a symmetric Banach $^*$-algebra, for every twisted action $(\alpha,\o)$ of $\G$ in a $C^*$-algebra $\A$\,. We extend this property to other types of decay, replacing the $\ell^1$-condition. We also make the connection with certain classes of twisted kernels, used in a theory of integral operators involving group $2$-cocycles. The algebra of these kernels is studied, both in intrinsic and in represented version.
\end{abstract}

\section{Introduction}\label{duci}

A Banach $^*$-algebra $\mathcal B$ is called {\rm symmetric} if the spectrum of $\,B^* B$ is positive for every $B\in\mathcal B$\,.
This happens (cf. \cite{Pa2}) if and only if the spectrum of any self-adjoint element is real. It is not known if the symmetry of $\mathcal B$ would imply the symmetry of the projective tensor product $\mathcal B\widehat\otimes\A$ for every $C^*$-algebra $\A$\,; such a property is called rigid symmetry.

All over this article $\G$ will be a discrete group with unit ${\sf e}$\,. If the convolution Banach $^*$-algebra $\ell^1(\G)$ is symmetric, $\G$ itself is called symmetric, while if it is rigidly symmetric, $\G$ will also be called rigidly symmetric. 

In \cite{Po} this terminology is applied to general locally compact groups. Various counterexamples are known. Certain solvable (thus amenable) groups and certain connected pollynomially growing groups are not symmetric \cite{LP}. Non-compact semi-simple Lie groups are never symmetric.

On the other hand, many interesting classes of groups are shown to be symmetric or even rigidly symmetric. If $\G$ is symmetric and ${\sf K}$ is compact, the semi-direct product ${\sf K}\rtimes\G$ is symmetric \cite{LP}. Compact \cite[Th. 1]{LP} or nilpotent locally compact groups \cite[Cor. 6]{Po} are rigidly symmetric. It is still not known if symmetry and rigid symmetry are equivalent for a locally compact group (see \cite{Po}). 

Besides the component-wise algebraic structure on $\ell^1(\G)\widehat\otimes\A\cong \ell^1(\G;\A)$\,, one can consider interesting and more complicated structures induced by twisted actions $(\alpha,\o)$ of $\G$ on the $C^*$-algebra $\A$\,, resulting in what is called twisted crossed products $\ell^1_{\alpha,\o}(\G;\A)$ \cite{Le,BS,PR1,PR2}. One recovers $\ell^1(\G;\A)$ if the action $\alpha$ is trivial ($\alpha_x$ is the identity map on $\A$) and the $2$-cocycle $\o$ is identically $1$\,. We recall the basic constructions in Section \ref{fourgo}, where we also replace the $\ell^1$-decay condition by others, described by admissible convolution algebras.

It is shown in \cite{FGL} that a discrete group $\G$ is rigidly symmetric if and only if $\ell^1_\alpha(\G;\A)$ is symmetric for every usual (untwisted) action $\alpha$ (see also  \cite{BB}). In Section \ref{tow} we extend this equivalence to include twisted $C^*$-dynamical systems, involving a $2$-cocycle $\o$\,, as well as other types of decay than those expressed by the $\ell^1$-condition. We also treat inverse closedness (also called spectral invariance, or Wiener property) of our twisted crossed products both inside enveloping $C^*$-algebras and faithfully represented as operators in Hilbert spaces. An extension, spectral invariance modulo a closed bi-sided $^*$-ideal, is exposed in \ref{trowak}, motivated by M. Lindner's work \cite{Li} on Fredholm operators. The results are illustrated in \ref{trow} by a series of Corollaries and Examples. An attempt to treat arbitrary locally compact groups failed, the obstacle being a non-trivial measurability issue; we are grateful to Professor Detlev Poguntke for pointing this out to us \cite{Pog}. 

In \cite[Sect. 4]{Ku} results are given about symmetry of $\ell^1$-twisted crossed products; the assumptions on the twisted $C^*$-dynamical system are quite strong. The nature of our result is different, showing for any discrete group that the presence of the twisted action is not relevant for symmetry issues. 
This would be particulary convenient if rigid symmetry will ever be shown to be equivalent to symmetry.

In Sections \ref{toww} and \ref{fourt} we describe the algebraic structure and the norm on a family of twisted kernels, connected to the twisted crossed product setting. They are useful in defining families of matrix operators whose composition involves cohomological factors. Actually, as they are defined, they form a class which is much larger than the one emerging from the theory of twisted crossed products, which can be recovered up to isomorphism only under a supplementary covariance condition (\ref{lilican}). We believe that the twisted matrix calculus has an interest of its own. When developed starting with the action of $\G$ on an Abelian $C^*$-algebra $\A$, it has interesting Hilbert-space representations that are studied in \ref{fuurt}. As a particular case, $\A$ could be a $C^*$-algebra of bounded complex functions on $\G$ (see \cite{BB,FGL}) and then a natural class of twisted kernels is already isomorphic to the twisted crossed product.  As a consequence of the results of the previous section, certain subfamilies of twisted kernels form symmetric Banach $^*$-algebra. Explicit faithful representations turn them into inverse closed subalgebras of bounded operators (in usual or in Fredholm sense) and all these are treated in \ref{firtoinog}; one gets an extension of the convolution dominated operators \cite{GKW}.

Besides the construction of the cohomological matrix calculus, most our results concern symmetry and inverse closedness, so they are part of what could be called noncommutative Wiener theory. The main purpose was to incorporate group $2$-cocycles in the presence of general types of decay. So we only cite references as \cite{Ba,Bas,BB,FS,FGL0,GKW,GL1,GL2,GL3,Kur} that have an immediate connection with our work. In particular, we follow rather closely some developments from \cite{FGL}. It is outside the scope of this paper to outline the history or the rich implications of Wiener's theory in its classical or its modern form. Most of the recent papers on this topic achieve this at least partly, while \cite{Gr,Kr} are excelent reviews exposing both the state of art of the subject and its numerous applications.

\section{Symmetric Banach $^*$-algebras associated to discrete groups}\label{fourtin}

\subsection{$\L$-type twisted crossed products}\label{fourgo}

Let us fix a discrete group $\G$\,. Recall that $\ell^1(\G)$ is a Banach $^*$-algebra with the usual $\ell^1$-norm, with the convolution product
\begin{equation}\label{libia}
(k\star l)(x):=\sum_{y\in\G}\,k(y)l(y^{-1}x)
\end{equation}
and with the involution
\begin{equation}\label{tunisia}
k^\star(x):=\overline{k(x^{-1})}\,.
\end{equation}

We also recall \cite{BeS} that a Banach space $\mathcal L(\G)$ of complex functions on $\G$ is called {\it a solid space of functions} if for any $k,l:\G\to\mathbb C$\,, if $|k(x)|\le|l(x)|$ everywhere and $l\in\mathcal L(\G)$ then $k\in\L(\G)$ and $\p\!k\!\p_{\L}\,\le\,\p\!l\!\p_{\L}$\,. Clearly $k$ and $|k|$ belong simultaneously to such a solid space of functions and their norms are the same.

\begin{Definition}\label{maroc}
We call {\rm admissible algebra} a subspace $\L(\G)$ of $\,\ell^1(\G)$ having its own norm $\p\!\cdot\!\p_{\L}$ which is stronger than the $\ell^1$-norm, such that
\begin{enumerate}
\item 
$\L(\G)$ is a solid space of functions,
\item
$\big(\L(\G),\star,^\star,\p\!\cdot\!\p_{\L}\big)$ is a unital Banach $^ *$-algebra. 
\end{enumerate}
\end{Definition}

Of course, $\ell^1(\G)$ itself is an admissible algebra. Another example is $\ell^{\infty,\vartheta}(\G):=\{k\mid \vartheta k\in \ell^\infty(\G)\}$\,, where $\vartheta:\G\to[1,\infty)$ is a subconvolutive weight, i.e. it satisfies $\vartheta^{-1}\star\vartheta^{-1}\le C\vartheta^{-1}$ for some constant $C$\,. Plenty of admissible algebras are contained in \cite{Fe,SW}. 

One can generate new admissible algebras by using weights.

\begin{Definition}\label{algeria}
A  {\rm submultiplicative, symmetric weight} is a function $\nu:\G\rightarrow[1,\infty)$ satisfying everywhere
\begin{equation}\label{slovenia}
\nu(xy)\le\nu(x)\nu(y)\,,\ \quad\nu(x^{-1})=\nu(x)\,.
\end{equation}
\end{Definition}

\begin{Lemma}\label{centrafricana}
Assume that $\L(\G)$ is an admissible algebra and $\nu$ is a weight on $\G$\,. Then 
\begin{equation}\label{niger}
\L^\nu(\G):=\{k\mid\nu k\in\L(\G)\}\,,\quad\ \p\!\cdot\!\p_{\L^\nu}\,:=\,\p\!\nu\,\cdot\!\p_{\L}
\end{equation} 
is an admissible algebra.
\end{Lemma}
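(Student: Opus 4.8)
The plan is to reduce every defining property of $\L^\nu(\G)$ to the corresponding property of $\L(\G)$ via the multiplication-by-$\nu$ map. Concretely, consider $T\colon\L^\nu(\G)\to\L(\G)$, $Tk:=\nu k$. By the definitions in (\ref{niger}) this is a linear bijection with $\p\!\nu k\!\p_{\L}=\p\!k\!\p_{\L^\nu}$, so $\L^\nu(\G)$ is a normed space that is complete, being isometric to the complete space $\L(\G)$. Since $\nu\ge 1$ one has $|k(x)|\le|(\nu k)(x)|$ for every $x$, whence $\p\!k\!\p_{\ell^1}\le\p\!\nu k\!\p_{\ell^1}\le C\p\!\nu k\!\p_{\L}=C\p\!k\!\p_{\L^\nu}$, using that $\p\!\cdot\!\p_{\L}$ dominates the $\ell^1$-norm; in particular $\L^\nu(\G)\subseteq\ell^1(\G)$ and its norm is stronger than the $\ell^1$-norm.

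Next I would check solidity. If $|k|\le|l|$ pointwise and $l\in\L^\nu(\G)$, then multiplying by the positive function $\nu$ gives $|\nu k|\le|\nu l|$ pointwise with $\nu l\in\L(\G)$; solidity of $\L(\G)$ yields $\nu k\in\L(\G)$ and $\p\!\nu k\!\p_{\L}\le\p\!\nu l\!\p_{\L}$, that is, $k\in\L^\nu(\G)$ and $\p\!k\!\p_{\L^\nu}\le\p\!l\!\p_{\L^\nu}$.

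For the $^*$-algebra structure, the involution is handled by the symmetry of $\nu$: since $\nu(x^{-1})=\nu(x)$ is real and positive,
\begin{equation*}
\nu(x)k^\star(x)=\nu(x)\overline{k(x^{-1})}=\overline{\nu(x^{-1})k(x^{-1})}=(\nu k)^\star(x)\,,
\end{equation*}
so $\nu k^\star=(\nu k)^\star\in\L(\G)$ and the involution on $\L^\nu(\G)$ is isometric. For the product, the decisive computation uses submultiplicativity: writing $x=y(y^{-1}x)$ gives $\nu(x)\le\nu(y)\nu(y^{-1}x)$, hence
\begin{equation*}
\big|\,\nu(x)(k\star l)(x)\,\big|\;\le\;\sum_{y\in\G}\nu(y)\,|k(y)|\,\nu(y^{-1}x)\,|l(y^{-1}x)|\;=\;\big(|\nu k|\star|\nu l|\big)(x)\,,
\end{equation*}
i.e.\ the pointwise domination $|\nu(k\star l)|\le|\nu k|\star|\nu l|$. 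By solidity, $|\nu k|,|\nu l|\in\L(\G)$ with the same norms as $\nu k,\nu l$; since $\L(\G)$ is a Banach algebra, $|\nu k|\star|\nu l|\in\L(\G)$ with $\p\,|\nu k|\star|\nu l|\,\p_{\L}\le\p\!\nu k\!\p_{\L}\p\!\nu l\!\p_{\L}$. Applying solidity once more to the displayed inequality gives $\nu(k\star l)\in\L(\G)$ with $\p\!\nu(k\star l)\!\p_{\L}\le\p\!k\!\p_{\L^\nu}\p\!l\!\p_{\L^\nu}$, so $k\star l\in\L^\nu(\G)$ and the norm is submultiplicative. Finally, solidity forces the Dirac mass $\delta_{\sf e}$ into $\L(\G)$ (a nonzero solid subalgebra contains some $\delta_{x_0}$, hence $\delta_{x_0}\star\delta_{x_0^{-1}}=\delta_{\sf e}$), so $\delta_{\sf e}$ is its convolution unit; since $\nu\delta_{\sf e}=\nu({\sf e})\delta_{\sf e}\in\L(\G)$, we get $\delta_{\sf e}\in\L^\nu(\G)$, and $\L^\nu(\G)$ is unital.

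The main obstacle, and essentially the only non-formal step, is the product estimate: everything hinges on converting the pointwise convolution bound produced by submultiplicativity of $\nu$ into a genuine $\L$-norm inequality, which is exactly the role the solidity hypothesis plays. The symmetry of $\nu$ plays the analogous but easier role for the involution, and all remaining verifications are routine transfers through the isometry $T$.
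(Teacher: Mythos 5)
Your proof is correct and follows essentially the same route as the paper's: solidity of $\L(\G)$ transfers through multiplication by $\nu$, the symmetry of $\nu$ handles the involution via $(\nu k)^\star=\nu k^\star$, and submultiplicativity plus solidity yields the pointwise bound $|\nu(k\star l)|\le|\nu k|\star|\nu l|$ and hence submultiplicativity of the norm. The only genuine addition is your explicit verification of unitality (that $\delta_{\sf e}\in\L(\G)$, hence $\delta_{\sf e}\in\L^\nu(\G)$), a point the paper's proof leaves implicit; your argument for it is valid.
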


\begin{proof}
Clearly $\p\!\cdot\!\p_{\L^\nu}$ is a complete norm, which is stronger than $\p\!\cdot\!\p_{\L}$ since $\nu(\cdot)\ge 1$ and $\L(\G)$ is solid. 
Since $\L(\G)$ is solid and $\nu$ is strictly positive, $\L^\nu(\G)$ is clearly also solid.

The weight $\nu$ being symmetric, one checks immediately that $(\nu k)^\star=\nu k^\star$ and this shows that $\L^\nu(\G)$ is stable under involution and that the involution is isometric.

By submultiplicativity of the weight one gets the inequality 
$$
|\nu(k\star l)|\le\nu(|k|\star|l|)\le|\nu k|\star|\nu l|\,.
$$ 
From it, using solidity, it follows that $\L^\nu(\G)$ is a subalgebra under convolution and that the norm $\p\!\cdot\!\p_{\L^\nu}$ is submultiplicative:
$$
\begin{aligned}
\p\!k\star l\!\p_{\L^\nu}\,&=\,\p\!\nu(k\star l)\!\p_{\L}\,\le\,\p\!|\nu k|\star|\nu l|\!\p_{\L}\\
&\le\,\p\!|\nu k|\!\p_{\L}\,\p\!|\nu l|\!\p_{\L}\,=\,\p\!\nu k\!\p_{\L}\,\p\!\nu l\!\p_{\L}\\
&=\,\p\!k\!\p_{\L^\nu}\p\!l\!\p_{\L^\nu}.
\end{aligned}
$$
\end{proof}

If $\A$ is a $C^*$-algebra, one denotes by $\Aut(\A)$ the group of its $^*$-automorphisms, by $\mathcal M(\A)$ its multiplier $C^*$-algebra (with the strict topology) and by $\mathcal U\mathcal M(\A)$ the corresponding unitary group.
Almost always $\A$ will be unital, so $\M(\A)$ will be identified with $\A$\,.

\begin{Definition}\label{olanda}
{\rm Twisted $C^*$-dynamical systems} $(\A,\alpha,\omega)$ are formed of a $C^*$-algebra $\A$\,, a map $\alpha:\G\rightarrow\Aut(\A)$ and a map $\o:\G\times\G\rightarrow\mathcal U\mathcal M(\A)$ satisfying for every $x,y,z\in\G$ 
\begin{equation}\label{luxemburg}
\alpha_x\circ\alpha_y={\sf ad}_{\o(x,y)}\circ\alpha_{xy}\,,
\end{equation} 
\begin{equation}\label{lichtenstein}
\o(x,y)\o(xy,z)=\alpha_x[\o(y,z)]\o(x,yz)\,,
\end{equation} 
\begin{equation}\label{flandra}
\omega(x,{\sf e})=1=\o({\sf e},x)\,.
\end{equation} 
\end{Definition}

If $(\A,\alpha,\o)$ is a twisted $C^*$-dynamical system, we denote by $\ell^{1}_{\alpha,\o}(\G;\A)$ the space $\ell^{1}(\G;\A)$ of integrable $\A$-valued functions on $\G$ endowed \cite{BS} with the composition law 
\begin{equation}\label{irlanda}
(f\diamond_{\alpha,\o}\!g)(x):=\sum_{y\in\G}\,f(y)\,\alpha_y\!\left[g(y^{-1}x)\right]\o(y,y^{-1}x)
\end{equation}
and the involution
\begin{equation}\label{islanda}
f^{\diamond_{\alpha,\o}}(x):=\o(x,x^{-1})^*\alpha_x\!\left[f(x^{-1})\right]^*.
\end{equation}
It is a Banach $^*$-algebra called {\it the $\ell^1$-twisted crossed product associated to} $(\A,\alpha,\o)$\,. 
The envelopping $C^*$-algebra of $\ell^{1}_{\alpha,\o}(\G;\A)$ \cite{PR1,PR2} is denoted by $\A\!\rtimes_\alpha^\o\!\G$ and called {\it the twisted crossed product $C^*$-algebra}.

\medskip
Now we are also given an admissible algebra $\L(\G)$\,. We define
\begin{equation}\label{tailanda}
\L(\G;\A):=\big\{f\in \ell^1(\G;\A)\mid\, \p\!f(\cdot)\!\p_\A\,\in\L(\G)\big\}
\end{equation}
 with norm $\p\!f\!\p_{\L(\G;\A)}\,:=\big\Vert\p\!f(\cdot)\!\p_\A\!\big\Vert_{\L}$\,. It is easy to see that $\L(\G;\A)$ is a $^*$-subalgebra of $\ell^1_{\alpha,\o}(\G;\A)$\,; when this structure is considered, we write $\L_{\alpha,\o}(\G;\A)$\,. This follows from the obvious estimations
$$
\p\!(f\diamond_{\alpha,\o}g)(x)\!\p_\A\,\le\,\sum_{y\in\G}\p\!f(y)\!\p_\A\,\p\!g(y^{-1}x)\!\p_\A\,=\,\big(\p\!f(\cdot)\!\p_\A\ast\p\!g(\cdot)\!\p_\A\big)(x)
$$
and
$$
\p\!f^{\diamond_{\alpha,\o}}(x)\!\p_\A\,=\,\p\!f(x^{-1})\!\p_\A\,=\,\p\!f(\cdot)\!\p_\A^\ast\!(x)
$$
and from the admissibility of $\L(\G)$\,. It is also easy to check that $\L_{\alpha,\o}(\G;\A)$ is a Banach $^*$-algebra with the (stronger) norm $\p\!f\!\p_{\L(\G;\A)}$\,. 

If $\alpha$ is the trivial action $\alpha_x(\varphi)=\varphi$ or if $\o=1$\,, they will desappear from the notation. The case $\omega=1$ leads to {\it the crossed product} \cite{Wi}. If in addition $\A=\mathbb C$ (with the trivial action) one recovers $\L(\G)$\,. We notice, for further use, that the  Banach space $\ell^1(\G;\A)$ can be identified \cite[1.10.11]{Pa1} with the projective tensor product $\ell^1(\G)\widehat\otimes\A$\,. For $\L(\G;\A)$\,, in general, there is no such a claim.

\begin{Definition}\label{ucise}
{\rm A covariant representation} $(\mathscr H,r,U)$ of the twisted $C^*$-dynamical system $(\A,\alpha,\o)$ is composed \cite{BS,PR1} of a Hilbert space $\mathscr H$, a non-degenerate $^*$-representation $r:\A\to\mathbb B(\mathscr H)$ and a unitary-valued map $U:\G\rightarrow\mathcal U(\mathscr H)$ satisfying for every $x,y\in\G$ and $\varphi\in\A$
\begin{equation}\label{cipru}
U(x)U(y)=r[\o(x,y)]U(xy)\ \quad{\rm and}\ \quad U(x)r(\varphi)U(x)^*=r[\alpha_x(\varphi)]\,.
\end{equation}
\end{Definition}

Given a covariant representation $\big(\mathscr H,r,U\big)$\,, {\rm the integrated form} \cite[pag. 512]{BS}
\begin{equation}\label{dobrogea}
(r\rtimes U)(f):=\sum_{x\in\G}r[f(x)]U(x)
\end{equation}
provides a $^*$-representation $r\rtimes U:\ell^1(\G;\A)\rightarrow\mathbb B(\mathscr H)$ that extends to the twisted crossed product $C^*$-algebra $\A\!\rtimes_\alpha^\o\!\G$ and restricts to any of the Banach $^*$-algebras $\L_{\alpha,\o}(\G;\A)$\,. The extension and the restrictions are all contractive.

\begin{Remark}\label{ulei}
{\rm It is known that covariant representations $\big(\mathscr H,r,U\big)$ with faithful $r$ exist. This will be used in Section \ref{tow}, so we indicate a construction.

Let $\pi:\A\to\mathbb B(\H)$ be a $^*$-representation of the $C^*$-algebra $\A$ in a separable Hilbert space $\H$\,. We can inflate $\pi$ to a $^*$-representation of $\A$ in $\mathscr H:=\ell^2(\G;\H)\cong \ell^2(\G)\otimes\H$ by
\begin{equation}\label{fleasca}
[r^\pi(\varphi)v](x):=\pi\big[\alpha_{x^{-1}}(\varphi)\big]v(x)\,.
\end{equation}
It is obvious that $r^\pi$ is injective if $\pi$ is injective. One also defines for every $y\in\G$
\begin{equation}\label{pleasca}
\big[L^\pi_\o(y) v\big](x):=\pi\big[\omega(x^{-1}\!,y)\big]v(y^{-1}x)\,.
\end{equation}
It is straightforward to show that $\big(\mathscr H,r^\pi,L^\pi_\o\big)$ is a covariant representation; we say that {\it it is induced by $\pi$}\,. 
A related version, involving right translations, can be found in \cite[pag. 517]{BS} and \cite[Def. 3.10]{PR1} for instance.
}
\end{Remark}

\subsection{Symmetry and inverse closedness of $\L$-type twisted crossed products}\label{tow}

\begin{Definition}\label{anglia}
Let $\L(\G)$ be an admissible algebra over the discrete group $\G$\,. 
Then $\G$ is called {\rm TCP-rigidly $\L$-symmetric} (rigidly symmetric in the sense of twisted crossed products for $\L$-type decay) if the Banach $^*$-algebra $\L_{\alpha,\o}(\G;\A)$ is symmetric for every twisted $C^*$-dynamical system $(\A,\alpha,\o)$ with group $\G$\,. 
\end{Definition}

If this is required only for the trivial case $(\alpha,\o)=({\sf id},1)$\,, we speak of {\it rigid $\L$-symmetry}. We also drop $\L$ if $\L(\G)=\ell^1(\G)$\,, to reach standard terminology \cite{LP,Po}\,.

\begin{Definition}\label{derrin}
The $^*$-subalgebra $\mathfrak B$ of the unital $C^*$-algebra $\,\mathfrak C$ is called {\rm an inverse closed} (or {\rm spectral}, or {\rm Wiener}) {\rm subalgebra} if for every $f\in\mathfrak B$ that is invertible in $\mathfrak C$ one has $f^{-1}\in\mathfrak B$\,.
\end{Definition}

\medskip
The following result, extending \cite[Cor.\,1]{FGL}, shows that for every admissible algebra $\L(\G)$ the discrete group $\G$ is TCP-rigidly $\L$-symmetric if (and only if) it is rigidly $\L$-symmetric.

\begin{Theorem}\label{monaco}
Let $(\A,\alpha,\o)$ be a twisted $C^*$-dynamical system with rigidly $\L$-symmetric discrete group $\G$\,.
\begin{enumerate}
\item
$\L_{\alpha,\o}(\G;\A)$ is a symmetric Banach $^*$-algebra.
\item
$\L_{\alpha,\o}(\G;\A)$ is an inverse closed subalgebra of its enveloping $C^*$-algebra. In particular, if $\,\G$ is rigidly symmetric, $\ell^1_{\alpha,\o}(\G;\A)$ is spectral in $\A\!\rtimes_\alpha^\o\!\G$\,.
\item
Let $\,\Pi:\A\!\rtimes_\alpha^\o\!\G\rightarrow\mathbb B(\mathscr H)$ be a faithful $^*$-representation; then $\Pi\big[\ell^1_{\alpha,\o}(\G;\A)\big]$ is inverse-closed in $\mathbb B(\mathscr H)$\,.
\end{enumerate}
\end{Theorem}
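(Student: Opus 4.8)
The plan is to derive all three assertions from a single embedding that simultaneously absorbs the action $\alpha$ and the cocycle $\o$ into the coefficient $C^*$-algebra, thereby reducing everything to the defining hypothesis of rigid $\L$-symmetry. Set $\B:=\A\!\rtimes_\alpha^\o\!\G$, the enveloping $C^*$-algebra. It carries an isometric embedding $\A\hookrightarrow\mathcal M(\B)$ together with unitaries $u_x\in\mathcal U\mathcal M(\B)$ implementing the covariance, so that $\alpha_x={\sf ad}_{u_x}$ on $\A$, $\;u_x u_y=\o(x,y)u_{xy}$ and $u_{\sf e}=1$; these may be produced concretely from the covariant representation induced by a faithful $\pi$ as in Remark \ref{ulei}. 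I would then define $\Phi:\L_{\alpha,\o}(\G;\A)\to\L(\G;\B)$, the target carrying the trivial (cocycle-free, untwisted) structure $(\mathrm{id},1)$, by $(\Phi f)(x):=f(x)\,u_x$. Using that $\A\hookrightarrow\mathcal M(\B)$ is isometric and $u_x$ is unitary, one has $\p\!(\Phi f)(x)\!\p_\B=\p\!f(x)\!\p_\A$, so $\Phi$ indeed lands in $\L(\G;\B)$ and is isometric for the $\L$-norms; in particular its range is a closed $^*$-subalgebra.

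First I would check that $\Phi$ is an injective $^*$-homomorphism. Multiplicativity is a direct manipulation: inserting $u_y g(y^{-1}x)u_y^{-1}=\alpha_y[g(y^{-1}x)]$ and $u_y u_{y^{-1}x}=\o(y,y^{-1}x)u_x$ turns the trivial convolution of $\Phi f$ and $\Phi g$ into $\Phi(f\diamond_{\alpha,\o}g)$. Compatibility with the involutions reduces, after using $\alpha_x={\sf ad}_{u_x}$, to the identity $u_{x^{-1}}^{-1}=\o(x,x^{-1})^*u_x$, which is exactly the normalized cocycle relation $u_x u_{x^{-1}}=\o(x,x^{-1})$. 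This cohomological bookkeeping is the one place demanding care, and it is really the crux of the whole argument: a single map neutralises both $\alpha$ and $\o$ at once, precisely because the crossed-product unitaries encode both.

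With $\Phi$ in hand, statement (1) is immediate. By hypothesis $\G$ is rigidly $\L$-symmetric, so $\L(\G;\B)$ is symmetric. A closed unital $^*$-subalgebra of a symmetric Banach $^*$-algebra is again symmetric: for a self-adjoint element, its spectrum in the subalgebra is obtained from its (real) spectrum in the ambient algebra by filling in bounded components of the complement, of which there are none since a compact subset of $\mathbb R$ does not disconnect $\mathbb C$. Transporting symmetry back along the isometric $^*$-isomorphism $\Phi$ onto its range yields symmetry of $\L_{\alpha,\o}(\G;\A)$. For (2) I would invoke the classical fact that a symmetric Banach $^*$-algebra is spectrally invariant in its enveloping $C^*$-algebra; since the canonical map of $\L_{\alpha,\o}(\G;\A)$ into $\A\!\rtimes_\alpha^\o\!\G$ is faithful with dense range, the latter is exactly this enveloping $C^*$-algebra, and inverse closedness follows, the $\ell^1$-case being the specialisation $\L=\ell^1$.

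Finally, for (3) let $\Pi$ be faithful. As an injective $^*$-homomorphism of $C^*$-algebras it is isometric, hence a $^*$-isomorphism onto the $C^*$-subalgebra $\Pi[\A\!\rtimes_\alpha^\o\!\G]\subseteq\mathbb B(\mathscr H)$, preserving spectra and invertibility. If $\Pi(f)$ with $f\in\ell^1_{\alpha,\o}(\G;\A)$ is invertible in $\mathbb B(\mathscr H)$, then by spectral permanence of $C^*$-subalgebras its inverse already lies in $\Pi[\A\!\rtimes_\alpha^\o\!\G]$; pulling back through $\Pi$, the corresponding element of $\A\!\rtimes_\alpha^\o\!\G$ is invertible there, so by (2) its inverse lies in $\ell^1_{\alpha,\o}(\G;\A)$, whence $\Pi(f)^{-1}\in\Pi[\ell^1_{\alpha,\o}(\G;\A)]$. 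The only genuine difficulty throughout is the verification that $\Phi$ respects the twisted product and involution; everything else is transfer of spectral data along isometric $^*$-maps together with the two standard permanence principles, that symmetry descends to closed $^*$-subalgebras and that $C^*$-subalgebras are inverse closed.
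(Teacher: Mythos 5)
Your embedding $\Phi$ is, up to packaging, exactly the paper's key step (Lemma \ref{danemarca}): the paper sets $(\th f)(x)=r[f(x)]U(x)$ for a covariant representation $(\mathscr H,r,U)$ with $r$ faithful, while you set $(\Phi f)(x)=f(x)u_x$ with canonical unitaries $u_x$ implementing $(\alpha,\o)$ --- which you anyway propose to extract from Remark \ref{ulei}, so the two constructions are the same untwisting map. Your part (1) (symmetry passes to closed unital $^*$-subalgebras, because a compact real spectrum leaves no bounded holes to fill) and your part (3) (spectral permanence of $C^*$-subalgebras combined with part (2)) likewise match the paper's argument.

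The genuine gap is in part (2). The ``classical fact'' you invoke --- a symmetric Banach $^*$-algebra is spectral in its enveloping $C^*$-algebra --- is a theorem about \emph{reduced} Banach $^*$-algebras: one must first know that the universal $C^*$-seminorm of $\L_{\alpha,\o}(\G;\A)$ is a norm, since otherwise the algebra does not even sit inside its enveloping $C^*$-algebra. Your substitute, ``the canonical map into $\A\!\rtimes_\alpha^\o\!\G$ is faithful with dense range, hence the latter is the enveloping $C^*$-algebra,'' fails on both counts. First, that faithfulness \emph{is} the reducedness in question; you assert it rather than prove it. Second, faithfulness plus dense range does not identify a target $C^*$-algebra with the enveloping one: $\ell^1({\sf F}_2)$ sits faithfully and densely in $C^*_r({\sf F}_2)$, yet its enveloping $C^*$-algebra is $C^*({\sf F}_2)\neq C^*_r({\sf F}_2)$. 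Third, for a general admissible $\L(\G)$ the identification is simply false: admissibility allows $\L(\G)=\{k\in\ell^1(\G)\,\mid\,\supp k\subset{\sf H}\}$ for a subgroup ${\sf H}$ --- in the extreme case ${\sf H}=\{{\sf e}\}$ one gets $\L_{\alpha,\o}(\G;\A)\cong\A$, whose enveloping $C^*$-algebra is $\A$, not $\A\!\rtimes_\alpha^\o\!\G$ --- and the statement of part (2) concerns the enveloping $C^*$-algebra of $\L_{\alpha,\o}(\G;\A)$ itself. The repair is already contained in your own construction, and it is how the paper argues: $\Phi$ realizes $\L_{\alpha,\o}(\G;\A)$ as a closed $^*$-subalgebra of the \emph{untwisted} algebra $\L(\G;\B)\subset\ell^1(\G;\B)\cong\ell^1(\G)\widehat\otimes\B$; the latter is reduced, a $^*$-subalgebra of a reduced $^*$-algebra is reduced, and for reduced symmetric Banach $^*$-algebras the Palmer theorem gives spectral invariance in the enveloping $C^*$-algebra, with the $\ell^1$-clause then being the special case in which that enveloping $C^*$-algebra is, by definition, $\A\!\rtimes_\alpha^\o\!\G$.
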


The next basic Lemma is inspired by \cite[Prop. 2]{FGL}, to which it reduces if $\o=1$\,, $\A=\ell^\infty(\G)$ and $\L(\G)=\ell^1(\G)$\,; see also \cite[Prop. 2.7]{BB}.

\begin{Lemma}\label{danemarca}
Let $(\A,\alpha,\o)$ be a twisted $C^*$-dynamical system with discrete group $\G$ and $\L(\G)$ an admissible algebra. There exists a $C^*$-algebra $\mathscr B$ and an isometric $^*$-morphism 
$$
\th:\L_{\alpha,\o}(\G;\A)\rightarrow \L_{{\sf id},1}(\G;\mathscr B)\equiv \L(\G;\mathscr B)\,.
$$
\end{Lemma}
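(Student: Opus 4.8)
The plan is to \emph{absorb the twisting into the coefficient algebra}: I would build a $C^*$-algebra $\mathscr B$ carrying a faithful covariant representation of $(\A,\alpha,\o)$ and use the unitaries that implement the action to convert the twisted composition law $\diamond_{\alpha,\o}$ into the plain ($\alpha={\sf id}$, $\o=1$) convolution on $\L(\G;\mathscr B)$. The group variable is then left to convolve trivially, while all the algebraic complexity is carried by the non-commutative multiplication of $\mathscr B$.

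First I would invoke Remark \ref{ulei} to fix a covariant representation $(\mathscr H,r,U)$ of $(\A,\alpha,\o)$ with $r$ \emph{faithful} (e.g. the one induced by a faithful $\pi$). Then I set $\mathscr B:=\mathbb B(\mathscr H)$ (the sub-$C^*$-algebra generated by $r(\A)$ and $U(\G)$ would also do) and define $\th:\L_{\alpha,\o}(\G;\A)\to\L(\G;\mathscr B)$ by $[\th(f)](x):=r[f(x)]\,U(x)\in\mathscr B$. Since $U(x)$ is unitary and $r$ is isometric, $\p\th(f)(x)\p_{\mathscr B}=\p r[f(x)]\p=\p f(x)\p_\A$ for every $x$, so the scalar function $x\mapsto\p\th(f)(x)\p_{\mathscr B}$ coincides with $x\mapsto\p f(x)\p_\A\in\L(\G)$. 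This simultaneously shows $\th(f)\in\L(\G;\mathscr B)$ and, because the $\L(\G;\cdot)$-norm sees only these scalar functions, that $\th$ is isometric; injectivity is then automatic.

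The core is checking that $\th$ intertwines the two structures. For the product I would compute $[\th(f)\diamond_{{\sf id},1}\th(g)](x)=\sum_y r[f(y)]\,U(y)\,r[g(y^{-1}x)]\,U(y^{-1}x)$, push $U(y)$ through $r[g(y^{-1}x)]$ by the covariance relation $U(y)r(\varphi)=r[\alpha_y(\varphi)]U(y)$, and collapse $U(y)U(y^{-1}x)=r[\o(y,y^{-1}x)]U(x)$ from \eqref{cipru}; this reproduces exactly the summand of $(f\diamond_{\alpha,\o}g)(x)$ in \eqref{irlanda}, giving $\th(f\diamond_{\alpha,\o}g)=\th(f)\diamond_{{\sf id},1}\th(g)$. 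For the involution I would expand $[\th(f)]^{\diamond_{{\sf id},1}}(x)=\big(r[f(x^{-1})]U(x^{-1})\big)^{*}=U(x^{-1})^{*}\,r[f(x^{-1})^{*}]$, rewrite $U(x^{-1})^{*}=r[\o(x,x^{-1})]^{*}U(x)$ (using $U({\sf e})=1$, which follows from $\o({\sf e},{\sf e})=1$ in \eqref{flandra}), and apply covariance once more to obtain $r\big[\o(x,x^{-1})^{*}\alpha_x[f(x^{-1})]^{*}\big]U(x)$, i.e. $\th\big(f^{\diamond_{\alpha,\o}}\big)(x)$ as prescribed by \eqref{islanda}.

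I expect the only delicate points to be bookkeeping rather than conceptual: keeping the cocycle factor $\o(x,x^{-1})$ in the correct position throughout the involution computation, and justifying $U({\sf e})=1$ from the normalization \eqref{flandra} together with $r(1)=1$ (valid since $\A$ is unital and $r$ non-degenerate). Everything else is forced once one lets a faithful covariant representation carry the twist; the enlargement $\A\rightsquigarrow\mathbb B(\mathscr H)$ is exactly what supplies the unitaries $U(x)$ that the untwisted target $\L(\G;\mathscr B)$ would otherwise lack.
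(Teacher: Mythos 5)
Your proposal is correct and follows essentially the same route as the paper: both fix a covariant representation $(\mathscr H,r,U)$ with $r$ faithful (via Remark \ref{ulei}), define $\th(f)(x)=r[f(x)]U(x)$ into $\L(\G;\mathscr B)$ for a $C^*$-algebra $\mathscr B\subset\mathbb B(\mathscr H)$ containing $r(\A)U(\G)$, and verify isometry, multiplicativity and $^*$-compatibility through the covariance relations \eqref{cipru} and the identity $U(x^{-1})^{*}=r[\o(x,x^{-1})]^{*}U(x)$. Your explicit justification of $U({\sf e})=1$ from \eqref{flandra} is a detail the paper leaves implicit, but the argument is the same.
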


\begin{proof}
We use a covariant representation $(\mathscr H,r,U)$ (cf. Definition \ref{ucise}) of the twisted $C^*$-dynamical system $(\A,\alpha,\o)$ with $r$ faithful and choose $\mathscr B\,$ to be a $C^*$-subalgebra of $\,\mathbb B(\mathscr H)$ containing $r(\A)U(\G)$. Then we set 
\begin{equation}\label{kamchatka}
\th:\L_{\alpha,\o}(\G;\A)\rightarrow \L(\G;\mathscr B)\,,\quad(\th f)(x):=r[f(x)]U(x)\,.
\end{equation}

Clearly $\th$ is well-defined and isometric:
$$
\begin{aligned}
\p\! \th f\!\p_{\L(\G;\mathscr B)}\,&=\big\Vert\!\p\!(\th f)(\cdot)\!\p_{\mathscr B}\!\big\Vert_{\L(\G)}\\
&=\big\Vert\!\p\!r[f(\cdot)]U(\cdot)\!\p_{\mathscr B}\!\Vert_{\L(\G)}\\
&=\big\Vert\!\p\!r[f(\cdot)]\!\p_{\mathscr B}\!\big\Vert_{\L(\G)}\\
&=\,\p\!f\!\p_{\L(\G;\A)}\,,
\end{aligned}
$$
since $U(x)$ is unitary and $r$, being faithful, is isometric.

For two elements $f,g$ of $\L_{\alpha,\o}(\G;\A)$ one computes using (\ref{cipru}) and the definitions
$$
\begin{aligned}
(\th f\star\th g)(x)&=\sum_{y\in\G}(\th f)(y)(\th g)(y^{-1}x)\\
&=\sum_{y\in\G}r[f(y)]\,U(y)\,r\!\left[g(y^{-1}x)\right]U(y^{-1}x)\\
&=\sum_{y\in\G}r[f(y)]\,U(y)\,r\!\left[g(y^{-1}x)\right]U(y)^* U(y)U(y^{-1}x)\\
&=\sum_{y\in\G}r[f(y)]\,r\!\left[\alpha_y\!\left(g(y^{-1}x)\right)\right] r[\o(y,y^{-1}x)]\,U(x)\\
&=r\!\left[(f\diamond_{\alpha,\o}\!g)(x)\right]U(x)\\
&=\left[\th\!\left(f\diamond_{\alpha,\o}\!g\right)\right]\!(x)\,.
\end{aligned}
$$
Finally we treat the involution, using the identity $U(x^{-1})=U(x)^*\,r\!\left[\o(x,x^{-1})\right]$\,:
$$
\begin{aligned}
(\th f)^\star(x)&=(\th f)(x^{-1})^*\\
&=\left(r\!\left[f(x^{-1})\right]U(x^{-1})\right)^*\\
&=U(x^{-1})^*\,r\!\left[f(x^{-1})\right]^*\\
&=r\!\left[\o(x,x^{-1})^*\right]U(x)\,r\!\left[f(x^{-1})\right]^*U(x)^*U(x)\\
&=r\!\left[\o(x,x^{-1})^*\right]\,r\!\left(\alpha_x\!\left[f(x^{-1})\right]^*\right)U(x)\\
&=r\!\left[\th\!\left(f^{\diamond_{\alpha,\o}}\right)\right]U(x)\\
&=\left[\th\!\left(f^{\diamond_{\alpha,\o}}\right)\right]\!(x)\,.
\end{aligned}
$$
\end{proof}

\begin{Remark}\label{galicia}
{\rm Using the precise notation $\th=\th_{r,U}$\,, the integrated form (\ref{dobrogea}) can be written as
$r\!\rtimes\!U=I\circ\th_{r,U}$\,, in terms of the $^*$-morphism
$I:\ell^{1}(\G;\mathscr B)\rightarrow\mathscr B$ given by $I(F):=\sum_{x\in\G}F(x)$\,.
}
\end{Remark}

We are now in a position to {\it prove Theorem \ref{monaco}}.

\begin{proof}
1. It is known \cite[Th.11.4.2]{Pa2} that symmetry of a Banach $^*$-algebra is inherited by its closed $^*$-algebras. This, Lemma \ref{danemarca} and the fact that $\L(\G;\mathscr B)$ was assumed symmetric prove the result.

\medskip
2. We recall \cite{Pa2} that a $^*$-algebra is called {\it reduced} if its universal $C^*$-seminorm is in fact a norm. It is known \cite[11.4]{Pa2} that a reduced Banach $^*$-algebra is symmetric if and only if it is a spectral subalgebra of its enveloping $C^*$-algebra. Thus, by point 1, we only need to know that $\L_{\alpha,\o}(\G;\A)$ is reduced. But a $^*$-subalgebra of a reduced $^*$-algebra is also reduced \cite[Prop. 9.7.4]{Pa2}. By Lemma \ref{danemarca} $\,\L_{\alpha,\o}(\G;\A)$ is isometrically isomorphic to a $^*$-subalgebra of $\L(\G;\mathscr B)\subset \ell^1(\G;\mathscr B)$\,, so everything follows from the fact that $\ell^1(\G;\mathscr B)$ is reduced.

\medskip
3. Follows immediately from 2, from obvious properties of isomorphisms and from the fact that any $C^*$-algebra is inverse closed in a larger $C^*$-algebra.
\end{proof}

\subsection{Inverse closedness modulo ideals}\label{trowak}

In a $C^*$-algebra we will call briefly {\it ideal} a closed self-adjoint bi-sided ideal.

\begin{Definition}\label{dorin}
Let $\mathfrak J$ be an ideal  of the unital $C^*$-algebra $\mathfrak C$\,. The $^*$-subalgebra $\mathfrak B$ of $\,\mathfrak C$ is called {\rm $\mathfrak J$-inverse closed} if for every $f\in\mathfrak B$ such that there are elements $g\in\mathfrak C$\,, $h,k\in\mathfrak J$ with $fg=1_\mathfrak C+h$ and $gf=1_\mathfrak C+k$ one actually has $g\in\mathfrak B$\,.
\end{Definition}

One can rephrase: {\it $\mathfrak B$ is $\mathfrak J$-inverse closed in $\mathfrak C$ if and only if $\,\mathfrak B/\mathfrak J$ (shorthand for $\,\mathfrak B/(\mathfrak B\cap\mathfrak J)$) is inverse closed in $\mathfrak C/\mathfrak J$\,.} If $\mathfrak J=\{0\}$ the notion coincides with that introduced in Definition \ref{derrin}. 

\medskip
Suppose now that $(\A,\alpha,\o)$ is a twisted action of the discrete group $\G$ and that $\mathcal J$ is an ideal of $\A$ that is $\alpha$-invariant: $\alpha_x(\mathcal J)\subset\mathcal J$ for every $x\in\G$\,. We denote by the same letter $\alpha$ the action of $\G$ by automorphisms of $\mathcal J$ defined by restrictions. On the other hand, the unital $C^*$-algebra $\A$ is naturally embedded in the multiplier algebra $\M(\mathcal J)$ \cite{Wi}, so $\o(x,y)$ can be seen as a multiplier of $\mathcal J$ for every $x,y\in\G$\,. Finally one gets the $C^*$-dynamical system $(\mathcal J,\alpha,\o,\G)$\,.  It is known \cite{PR2} that the twisted crossed product $\mathcal J\!\rtimes_\alpha^\o\!\G$ may be identified with an ideal of $\mathcal A\!\rtimes_\alpha^\o\!\G$\,. Under this identification, $\ell^1_{\alpha,\o}(\G;\mathcal J)$ becomes an ideal of $\ell^1_{\alpha,\o}(\G;\A)$ in the natural way: the $\ell^1$-function $f:\G\rightarrow \mathcal J$ is taken to be $\A$-valued. Now we use the exactness of the twisted crossed product construction to prove

\begin{Theorem}\label{dorina}
Assume that the discrete group $\G$ is rigidly symmetric. Then the Banach $^*$-algebra $\ell^1_{\alpha,\o}(\G;\A)$ is $\mathcal J\!\rtimes_\alpha^\o\!\G$-inverse closed in the twisted crossed product $\mathcal A\!\rtimes_\alpha^\o\!\G$ for every $\alpha$-invariant ideal $\mathcal J$ of $\A$\,.
\end{Theorem}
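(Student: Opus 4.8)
The plan is to reduce the statement about $\mathfrak J$-inverse closedness to the ordinary inverse closedness already established in Theorem \ref{monaco}, by passing to the quotient. Recall the reformulation noted just after Definition \ref{dorin}: $\ell^1_{\alpha,\o}(\G;\A)$ is $\mathcal J\!\rtimes_\alpha^\o\!\G$-inverse closed in $\mathcal A\!\rtimes_\alpha^\o\!\G$ if and only if the quotient $\ell^1_{\alpha,\o}(\G;\A)/\big(\ell^1_{\alpha,\o}(\G;\mathcal J)\big)$ is inverse closed in the $C^*$-quotient $\big(\mathcal A\!\rtimes_\alpha^\o\!\G\big)/\big(\mathcal J\!\rtimes_\alpha^\o\!\G\big)$. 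So the first step is to identify both quotients as objects of the same type to which Theorem \ref{monaco} applies.

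The key input is exactness of the twisted crossed product construction, alluded to in the text just before the statement. Since $\mathcal J$ is an $\alpha$-invariant ideal of $\A$, the quotient $\A/\mathcal J$ carries an induced twisted $C^*$-dynamical structure $(\A/\mathcal J,\dot\alpha,\dot\o)$, where $\dot\alpha$ and $\dot\o$ are obtained by composing $\alpha$ and $\o$ with the quotient map $q:\A\to\A/\mathcal J$ (using that the multiplier $\o(x,y)$ descends to a unitary multiplier of $\A/\mathcal J$). Exactness gives a $C^*$-isomorphism
\begin{equation}\label{exactiso}
\big(\mathcal A\!\rtimes_\alpha^\o\!\G\big)/\big(\mathcal J\!\rtimes_\alpha^\o\!\G\big)\;\cong\;(\A/\mathcal J)\!\rtimes_{\dot\alpha}^{\dot\o}\!\G\,.
\end{equation}
At the $\ell^1$-level the map $f\mapsto q\circ f$ is a surjective $^*$-morphism $\ell^1_{\alpha,\o}(\G;\A)\to\ell^1_{\dot\alpha,\dot\o}(\G;\A/\mathcal J)$ whose kernel is precisely $\ell^1_{\alpha,\o}(\G;\mathcal J)$, so the algebraic quotient $\ell^1_{\alpha,\o}(\G;\A)/\ell^1_{\alpha,\o}(\G;\mathcal J)$ is naturally identified with $\ell^1_{\dot\alpha,\dot\o}(\G;\A/\mathcal J)$, and under this identification it sits inside the $C^*$-algebra on the right of (\ref{exactiso}) exactly as the image of $\ell^1_{\dot\alpha,\dot\o}(\G;\A/\mathcal J)$ in its enveloping $C^*$-algebra.

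Once these identifications are in place, the conclusion is immediate: $(\A/\mathcal J,\dot\alpha,\dot\o)$ is a twisted $C^*$-dynamical system over the same rigidly symmetric group $\G$, so Theorem \ref{monaco}(2) (with $\L=\ell^1$) tells us that $\ell^1_{\dot\alpha,\dot\o}(\G;\A/\mathcal J)$ is inverse closed in $(\A/\mathcal J)\!\rtimes_{\dot\alpha}^{\dot\o}\!\G$. Transporting this through (\ref{exactiso}) yields that the quotient $\ell^1_{\alpha,\o}(\G;\A)/\ell^1_{\alpha,\o}(\G;\mathcal J)$ is inverse closed in the $C^*$-quotient, which by the reformulation is exactly $\mathcal J\!\rtimes_\alpha^\o\!\G$-inverse closedness.

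The main obstacle is the bookkeeping in the first two steps rather than any deep new idea: one must verify that the quotient data $(\dot\alpha,\dot\o)$ genuinely form a twisted $C^*$-dynamical system (checking that $\o(x,y)$ maps to a well-defined unitary multiplier of $\A/\mathcal J$ and that the cocycle identities (\ref{lichtenstein})--(\ref{flandra}) survive), and that the two identifications — of the $C^*$-quotient via exactness and of the Banach $^*$-algebra quotient via $f\mapsto q\circ f$ — are compatible, i.e. the inclusion of the Banach quotient into the $C^*$-quotient coincides with the canonical embedding of $\ell^1_{\dot\alpha,\dot\o}(\G;\A/\mathcal J)$ into its enveloping $C^*$-algebra. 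The exactness statement itself is cited from \cite{PR2}, so it may be invoked rather than reproved; the essential content we are contributing is the observation that $\mathfrak J$-inverse closedness for the fixed system reduces, via exactness, to plain inverse closedness for the quotient system, which is already covered by Theorem \ref{monaco}.
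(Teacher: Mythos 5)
Your proposal is correct and follows essentially the same route as the paper's own proof: reduce $\mathcal J\!\rtimes_\alpha^\o\!\G$-inverse closedness to ordinary inverse closedness of the quotient, form the quotient twisted $C^*$-dynamical system $(\A/\mathcal J,\tilde\alpha,\tilde\o,\G)$, identify the Banach quotient with $\ell^1_{\tilde\alpha,\tilde\o}(\G;\A/\mathcal J)$ and the $C^*$-quotient with its enveloping $C^*$-algebra via the exactness result of \cite{PR2}, and then apply Theorem \ref{monaco}. The only cosmetic difference is that you cite Theorem \ref{monaco}(2) directly, while the paper invokes symmetry (point 1) and then passes to spectral invariance, which is the same content.
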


\begin{proof}
Setting $\,\mathfrak C:=\mathcal A\!\rtimes_\alpha^\o\!\G$\,,\, $\mathfrak J:=\mathcal J\!\rtimes_\alpha^\o\!\G$ and $\mathfrak B:=\ell^1_{\alpha,\o}(\G;\A)$\,, we must show that $\mathfrak B/\mathfrak J$ is inverse closed in $\mathfrak C/\mathfrak J$\,. Note that $\mathfrak B\cap\mathfrak J=\ell^1(\G;\mathcal J)$\,. One has a natural quotient twisted $C^*$-dynamical system $\big(\A/\mathcal J,\tilde\alpha,\tilde\o,\G\big)$ given by
\begin{equation}\label{valentin}
\tilde\alpha_x(\varphi+\mathcal J):=\alpha_x(\varphi)+\mathcal J\quad{\rm and}\quad\tilde\o(x,y):=\o(x,y)+\mathcal J\,.
\end{equation}
Then the quotient $\mathfrak B/\mathfrak J\equiv\mathfrak B/(\mathfrak B\cap\mathfrak J)=\ell^1_{\alpha,\o}\big(\G;\A\big)/\ell^1_{\alpha,\o}\big(\G;\mathcal J\big)$ is isomorphic to $\ell^1_{\tilde\alpha,\tilde\o}\big(\G;\A/\mathcal J\big)$\,, which is a symmetric Banach $^*$-algebra, by our Theorem \ref{monaco} and the fact that $\G$ was assumed rigidly symmetric. Thus it is inverse closed in its enveloping $C^*$-algebra, that can be identified \cite{PR2} to the quotient $\mathfrak C/\mathfrak J$\,.
\end{proof}

Our main motivation for introducing Definition \ref{dorin} and proving Theorem \ref{dorina} comes from the article \cite{Li}. 
We recall that a bounded operator $\,T$ in a Hilbert space $\mathscr H$ is called {\it Fredholm} if it has a closed range, a finite-dimensional kernel and its adjoint $T^*$ also has a finite-dimensional kernel. Let us denote by $q:\mathbb B(\mathscr H)\to\mathbb B(\mathscr H)/\mathbb K(\mathscr H)$ the canonical surjection. 
By Atkinson's Theorem, $T$ is Fredholm if and only if its canonical image $q(T)$ in the Calkin algebra $\mathbb B(\mathscr H)/\mathbb K(\mathscr H)$ is invertible. In other terms, there should exist $S\in\mathbb B(\mathscr H)$ and $K,L\in\mathbb K(\mathscr H)$ such that 
\begin{equation}\label{ruanda}
ST=1+K\,,\quad\ TS=1+L\,.
\end{equation}
One would like to know if the information upon $S$ can be automatically improved.

\begin{Definition}\label{burundi}
Let $\mathfrak F$ be a $^*$-algebra of bounded operators in $\mathscr H$ containing $\mathbb K(\mathscr H)$\,. We say that it is {\rm Fredholm inverse closed} if for every Fredholm element $T\in\mathfrak F$ there exist $S\in\mathfrak F$ and $K,L\in\mathbb K(\mathscr H)$ such that (\ref{ruanda}) holds. 
\end{Definition}

Clearly $\mathfrak F$ is Fredholm inverse closed if and only if $q(\mathfrak F)$ is a spectral $^*$-subalgebra of the Calkin algebra and this fits Definition \ref{dorin}. 

Let $(\A,\alpha,\o,\G)$ be a twisted $C^*$-dynamical system with discrete rigidly symmetric group $\G$ and $\mathcal J$ an $\alpha$-invariant (closed, self-adjoint bi-sided) ideal in $\A$\,. Let $\Pi:\mathcal A\!\rtimes_\alpha^\o\!\G\rightarrow\mathbb B(\mathscr H)$ be a faithful $^*$-representation such that $\Pi\big[\mathcal J\!\rtimes_\alpha^\o\!\G\big]=\mathbb K(\mathscr H)$ (using consacrated terminology, $\mathcal J\!\rtimes_\alpha^\o\!\G$ is an {\it elementary $C^*$-algebra}). Applying Theorem \ref{dorina} one gets immediately

\begin{Corollary}\label{dorinel}
The Banach $^*$-algebra $\Pi\big[\ell^1_{\alpha,\o}(\G;\A)\big]$ is Fredholm inverse-closed.
\end{Corollary}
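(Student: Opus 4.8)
The plan is to reduce everything to the equivalence recorded immediately after Definition \ref{burundi}: the $^*$-algebra $\mathfrak F:=\Pi\big[\ell^1_{\alpha,\o}(\G;\A)\big]$ is Fredholm inverse closed if and only if its image $q(\mathfrak F)$ is a spectral (inverse closed) $^*$-subalgebra of the Calkin algebra $\mathbb B(\mathscr H)/\mathbb K(\mathscr H)$, where $q$ is the canonical surjection. So it suffices to show that $q(\mathfrak F)$ is inverse closed in $\mathbb B(\mathscr H)/\mathbb K(\mathscr H)$, and I would obtain this by transporting Theorem \ref{dorina} through the faithful representation $\Pi$\,.

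First I would observe that, $\Pi$ being faithful, it is an isometric $^*$-isomorphism of $\mathcal A\!\rtimes_\alpha^\o\!\G$ onto the $C^*$-subalgebra $\Pi\big[\mathcal A\!\rtimes_\alpha^\o\!\G\big]$ of $\mathbb B(\mathscr H)$, carrying the ideal $\mathcal J\!\rtimes_\alpha^\o\!\G$ exactly onto $\mathbb K(\mathscr H)$ by the elementarity hypothesis. Injectivity then gives $\Pi^{-1}\big(\mathbb K(\mathscr H)\big)=\mathcal J\!\rtimes_\alpha^\o\!\G$, so $\Pi$ descends to an isometric $^*$-isomorphism
$$
\bar\Pi:\big(\mathcal A\!\rtimes_\alpha^\o\!\G\big)\big/\big(\mathcal J\!\rtimes_\alpha^\o\!\G\big)\;\xrightarrow{\ \sim\ }\;\Pi\big[\mathcal A\!\rtimes_\alpha^\o\!\G\big]\big/\mathbb K(\mathscr H)\;\subset\;\mathbb B(\mathscr H)/\mathbb K(\mathscr H)\,,
$$
under which the image of $\ell^1_{\alpha,\o}(\G;\A)$ corresponds precisely to $q(\mathfrak F)$\,.

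Next I would invoke Theorem \ref{dorina}: since $\G$ is rigidly symmetric, $\ell^1_{\alpha,\o}(\G;\A)$ is $\mathcal J\!\rtimes_\alpha^\o\!\G$-inverse closed in $\mathcal A\!\rtimes_\alpha^\o\!\G$, which by the rephrasing following Definition \ref{dorin} means that the image of $\ell^1_{\alpha,\o}(\G;\A)$ is inverse closed in the quotient $\big(\mathcal A\!\rtimes_\alpha^\o\!\G\big)/\big(\mathcal J\!\rtimes_\alpha^\o\!\G\big)$\,. Inverse closedness is preserved by the $^*$-isomorphism $\bar\Pi$, so $q(\mathfrak F)$ is inverse closed in the $C^*$-subalgebra $\Pi\big[\mathcal A\!\rtimes_\alpha^\o\!\G\big]/\mathbb K(\mathscr H)$ of the Calkin algebra. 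Finally, since any $C^*$-algebra is inverse closed in a larger $C^*$-algebra (as already used in the proof of Theorem \ref{monaco}), this intermediate algebra is spectral in the whole Calkin algebra, and transitivity of inverse closedness yields that $q(\mathfrak F)$ is spectral in $\mathbb B(\mathscr H)/\mathbb K(\mathscr H)$\,. By the equivalence of the first paragraph, $\mathfrak F$ is Fredholm inverse closed.

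There is no genuine obstacle here, which is exactly why the statement is a Corollary; the only points deserving care are verifying that $\Pi$ truly induces an \emph{injective} map on the quotients (this is where faithfulness together with elementarity, i.e. $\Pi^{-1}(\mathbb K(\mathscr H))=\mathcal J\!\rtimes_\alpha^\o\!\G$, is used) and threading inverse closedness through the intermediate $C^*$-subalgebra $\Pi\big[\mathcal A\!\rtimes_\alpha^\o\!\G\big]/\mathbb K(\mathscr H)$ rather than directly in the Calkin algebra. One should also note that this intermediate algebra shares the unit of the Calkin algebra: since it contains $\mathbb K(\mathscr H)$ acting non-degeneratly, the projection $\Pi(1)$ must act as the identity, so invertibility there and in $\mathbb B(\mathscr H)/\mathbb K(\mathscr H)$ coincide and the transitivity argument is legitimate.
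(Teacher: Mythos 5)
Your proof is correct and takes essentially the same route as the paper: the paper's own proof of this Corollary is precisely to "apply Theorem \ref{dorina} immediately", i.e.\ to transport the $\mathcal J\!\rtimes_\alpha^\o\!\G$-inverse closedness of $\ell^1_{\alpha,\o}(\G;\A)$ through the faithful representation $\Pi$ with $\Pi\big[\mathcal J\!\rtimes_\alpha^\o\!\G\big]=\mathbb K(\mathscr H)$, combined with the equivalence stated after Definition \ref{burundi}. Your write-up merely makes explicit the steps the paper leaves implicit (the induced quotient isomorphism $\bar\Pi$, the transitivity through the intermediate $C^*$-subalgebra, and the fact that non-degeneracy forces $\Pi(1)$ to be the identity), all of which are handled correctly.
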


In Example \ref{dorinet} and Corollary \ref{papita} we are going to present concrete versions of this result. In \cite{Li} the group $\G$ is $\Z^n$ and there is no cohomological factor $\o$\,. On the other hand many of the refinements of \cite{Li} are not available by the methods of the present article.

\subsection{Consequences and examples}\label{trow}

We start with an abstract consequence of Theorem \ref{monaco}.

\begin{Corollary}\label{chile}
The quotient of a (TCP-)rigidly symmetric discrete group by a normal subgroup is TCP-rigidly symmetric.
\end{Corollary}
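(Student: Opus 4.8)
The plan is to show that for every twisted $C^*$-dynamical system $(\A,\beta,\mu)$ over the quotient $\G/\mathsf N$ (where $\mathsf N$ is the given normal subgroup), the Banach $^*$-algebra $\ell^1_{\beta,\mu}(\G/\mathsf N;\A)$ is symmetric, which is exactly what TCP-rigid symmetry of $\G/\mathsf N$ demands. Writing $\pi:\G\to\G/\mathsf N$ for the canonical projection, I would first \emph{inflate} $(\A,\beta,\mu)$ to a twisted $C^*$-dynamical system $(\A,\alpha,\o)$ over $\G$ itself by setting $\alpha_x:=\beta_{\pi(x)}$ and $\o(x,y):=\mu(\pi(x),\pi(y))$. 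Since $\pi$ is a group homomorphism, the covariance relation (\ref{luxemburg}), the cocycle identity (\ref{lichtenstein}) and the normalization (\ref{flandra}) are inherited verbatim, so $(\A,\alpha,\o)$ is indeed a twisted $C^*$-dynamical system over $\G$; as $\G$ is rigidly symmetric, Theorem \ref{monaco} guarantees that $\ell^1_{\alpha,\o}(\G;\A)$ is symmetric. The crucial feature of the inflated data is that $\alpha$ and $\o$ are \emph{constant on $\mathsf N$-cosets}.

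Next I would introduce the coset-summation map
\begin{equation*}
P:\ell^1_{\alpha,\o}(\G;\A)\longrightarrow\ell^1_{\beta,\mu}(\G/\mathsf N;\A)\,,\qquad (Pf)(\bar x):=\sum_{y\in\pi^{-1}(\bar x)}f(y)\,,
\end{equation*}
which is well-defined and contractive because the inner sum converges absolutely in $\A$ and $\p\!Pf\!\p_1\le\p\!f\!\p_1$. Surjectivity is immediate by transporting any $g$ along a set-theoretic section of $\pi$, and the same device shows that the map induced on $\ell^1_{\alpha,\o}(\G;\A)/\ker P$ is isometric onto $\ell^1_{\beta,\mu}(\G/\mathsf N;\A)$. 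The heart of the argument is to check that $P$ is a $^*$-morphism for the twisted structures, i.e. $P(f\diamond_{\alpha,\o}g)=Pf\diamond_{\beta,\mu}Pg$ and $P(f^{\diamond_{\alpha,\o}})=(Pf)^{\diamond_{\beta,\mu}}$. In the convolution computation one parametrizes $y\in\G$ by its coset together with an element of $\mathsf N$, uses that $\alpha_y$ and $\o(y,\cdot)$ depend only on $\pi(y)$ to pull $\beta_{\pi(y)}$ and $\mu$ outside the inner sums, and uses the normality of $\mathsf N$ to reindex expressions such as $m^{-1}y^{-1}z$ as $y^{-1}z\,m'$ with $m'$ again ranging over $\mathsf N$; the two independent $\mathsf N$-summations then factor as $(Pf)(\bar y)$ and $(Pg)(\overline{y^{-1}x})$. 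The involution identity is verified similarly, using $\o(x,x^{-1})=\mu(\pi(x),\pi(x)^{-1})$ and $\mathsf N z^{-1}=z^{-1}\mathsf N$.

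Having realized $\ell^1_{\beta,\mu}(\G/\mathsf N;\A)$ as the quotient of the symmetric algebra $\ell^1_{\alpha,\o}(\G;\A)$ by the closed self-adjoint ideal $\ker P$, I would conclude with the standard fact that symmetry passes to quotients by closed $^*$-ideals: lifting a self-adjoint element of the quotient to a self-adjoint element upstairs and using that a surjective unital homomorphism cannot enlarge spectra, one sees that every self-adjoint element of the quotient has real spectrum, whence the quotient is symmetric by the criterion recalled in the Introduction (cf. \cite{Pa2}). The main obstacle is precisely the verification that $P$ intertwines the twisted products; everything there hinges on the two structural facts that the inflated cocycle and action factor through $\pi$ and that $\mathsf N$ is normal, so that the coset summations decouple cleanly. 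A side point worth isolating is why one argues through $\ell^1$ rather than a general admissible $\L$: the map $P$ need not be bounded from $\L(\G)$ to $\L(\G/\mathsf N)$ unless the defining norm is compatible with summation over $\mathsf N$-cosets, which is automatic only in the $\ell^1$ case treated here.
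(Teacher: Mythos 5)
Your proof is correct, but it follows a genuinely different route from the paper's. The paper never touches the twisted data on the quotient directly: it proves that $\G/{\sf N}$ is \emph{rigidly} symmetric in the untwisted sense, by identifying $\ell^1(\G/{\sf N};\B)\cong\ell^1(\G/{\sf N})\widehat\otimes\B$ with the quotient $\ell^1(\G;\B)/\,{\sf ker}\big(\Phi\widehat\otimes{\sf id}_\B\big)$ using Palmer's description of $\ell^1(\G/{\sf N})$ as $\ell^1(\G)/{\sf ker}(\Phi)$ and the compatibility of the projective tensor product with quotients, then invokes the quotient-stability of symmetry and finally applies Theorem \ref{monaco} \emph{downstairs}, to $\G/{\sf N}$, to upgrade rigid symmetry to TCP-rigid symmetry. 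You instead work at the level of twisted crossed products throughout: you inflate an arbitrary twisted system $(\A,\beta,\mu)$ from $\G/{\sf N}$ to $\G$ (the axioms are indeed inherited verbatim since the inflated data factor through the projection), apply Theorem \ref{monaco} \emph{upstairs} to get symmetry of $\ell^1_{\alpha,\o}(\G;\A)$, and push down through the explicit coset-summation $^*$-epimorphism $P$, whose intertwining of the twisted products and involutions you correctly reduce to the facts that $\alpha,\o$ are constant on cosets and that left multiplication permutes ${\sf N}$-cosets. Both arguments share the final ingredient (symmetry passes to quotients by closed $^*$-ideals, \cite[Th.11.4.2]{Pa2}); what yours buys is independence from the $\widehat\otimes$ machinery (Palmer's Prop. 1.10.10 and the identification $\ell^1(\G;\B)\cong\ell^1(\G)\widehat\otimes\B$), plus a structural fact of independent interest, namely that every twisted crossed product of $\G/{\sf N}$ is a quotient of an (inflated) twisted crossed product of $\G$; what the paper's route buys is brevity, since all twisted-action bookkeeping is delegated to Theorem \ref{monaco} and the only new verification is tensor-algebraic. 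Your closing remark about why the argument is confined to $\ell^1$ is also apt: the corollary is stated precisely in the $\ell^1$ case, and the summation map $P$ has no reason to be bounded for a general admissible $\L$.
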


\begin{proof}
Suppose that ${\sf N}$ is a closed normal subgroup of $\G$\,. Describing $\ell^1(\G/{\sf N})$ is not trivial, but it has been done in \cite[Th. pag. 146]{Pa1}. Without giving all the details, let us just say that a surjective $^*$-morphism $\Phi:\ell^1(\G)\to \ell^1(\G/{\sf N})$ exists, which defines an isometric $^*$-isomorphism $\ell^1(\G/{\sf N})\cong \ell^1(\G)/{\sf ker}(\Phi)$\,.
Thus $\ell^1(\G/{\sf N};\B)\cong \ell^1(\G/{\sf N})\widehat\otimes\B\cong \big[\ell^1(\G)/{\sf ker}(\Phi)\big]\widehat\otimes\B$ for every $C^*$-algebra $\B$\,. Using \cite[Prop 1.10.10]{Pa1} we see that $\big[\ell^1(\G)/{\sf ker}(\Phi)\big]\widehat\otimes\B$ can be identified to the quotient $\big[\ell^1(\G)\widehat\otimes\B\big]\,/\,{\sf ker}\big(\Phi\widehat\otimes{\sf id}_\B\big)$\,. To conclude, one gets the isometric isomorphism of Banach $^*$-algebras 
$$
\ell^1(\G/{\sf N};\B)\cong \ell^1(\G;\B)\,/\,{\sf ker}\big(\Phi\widehat\otimes{\sf id}_\B\big)\,.
$$ 
It is also known \cite[Th.11.4.2]{Pa2} that the quotient of a symmetric Banach $^*$-algebra by a closed bi-sided $^*$-ideal is symmetric. Hence $\G/{\sf N}$ is rigidly symmetric. Combining this with Theorem \ref{monaco} finishes the proof.
\end{proof}

\begin{Corollary}\label{sevastopol}
\begin{enumerate}
\item
 Finite extensions of discrete nilpotent groups are TCP-rigidly symmetric. In particular \cite{Gr}, discrete finitely-generated groups of polynomial growth are TCP-rigidly symmetric.
\item
If $\,{\sf Z}$ is a central subgroup with $\G/{\sf Z}$ rigidly symmetric, then $\G$ is TCP-rigidly symmetric. 
\end{enumerate}
\end{Corollary}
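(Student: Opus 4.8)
The plan is to reduce both parts, through Theorem \ref{monaco}, to ordinary rigid symmetry of $\G$. That theorem shows a discrete group is TCP-rigidly symmetric as soon as it is rigidly symmetric, the converse being immediate since the trivial cocycle is allowed; so rigid symmetry and TCP-rigid symmetry coincide. Consequently, in each case it suffices to prove that $\ell^1(\G;\A)\cong\ell^1(\G)\widehat\otimes\A$ is a symmetric Banach $^*$-algebra for every $C^*$-algebra $\A$. The two statements then differ only in how $\G$ is assembled from a rigidly symmetric quotient: a central (possibly infinite) kernel in 2, a finite quotient in 1.

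For 2, I would fix a normalized set-theoretic section $s:\G/{\sf Z}\to\G$ of the central extension $1\to{\sf Z}\to\G\to\G/{\sf Z}\to1$ and let $\tau:\G/{\sf Z}\times\G/{\sf Z}\to{\sf Z}$ be the associated cocycle, $s(p)s(q)=\tau(p,q)s(pq)$. Writing $g=z\,s(q)$ uniquely identifies $\ell^1(\G;\A)$ isometrically with a twisted crossed product $\ell^1_{{\sf id},\eta}\big(\G/{\sf Z};\ell^1({\sf Z};\A)\big)$ over $\G/{\sf Z}$: the action is trivial because ${\sf Z}$ is central, and the twist $\eta(p,q)=\delta_{\tau(p,q)}$ is the unitary multiplier given by the point mass at $\tau(p,q)$. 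The coefficient $\ell^1({\sf Z};\A)=\ell^1({\sf Z})\widehat\otimes\A$ is symmetric (${\sf Z}$ is abelian, hence rigidly symmetric) and, by the symmetry--spectrality equivalence used in the proof of Theorem \ref{monaco}, is a spectral subalgebra of its enveloping $C^*$-algebra $\mathscr B:=C^*({\sf Z})\otimes\A\cong C(\widehat{\sf Z},\A)$. Since $\G/{\sf Z}$ is rigidly symmetric, Theorem \ref{monaco} applied to the $C^*$-coefficient $\mathscr B$ makes $\ell^1_{{\sf id},\eta}(\G/{\sf Z};\mathscr B)$ symmetric; moreover this algebra shares its enveloping $C^*$-algebra $C^*(\G)\otimes\A$ with $\ell^1(\G;\A)$.

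The hard part is then to descend the symmetry from $\ell^1_{{\sf id},\eta}(\G/{\sf Z};\mathscr B)$ to the smaller $\ell^1(\G;\A)=\ell^1_{{\sf id},\eta}\big(\G/{\sf Z};\ell^1({\sf Z};\A)\big)$: the inclusion of coefficients $\ell^1({\sf Z};\A)\hookrightarrow\mathscr B$ is only norm-decreasing and does not have closed range, so the permanence of symmetry under closed $^*$-subalgebras (\cite[Th.\,11.4.2]{Pa2}) does not apply directly. I would instead transport spectral invariance through the $\G/{\sf Z}$-crossed product by disintegrating over the compact dual $\widehat{\sf Z}$. Each character $\chi\in\widehat{\sf Z}$ yields an evaluation $\ell^1(\G;\A)\to\ell^1_{\sigma_\chi}(\G/{\sf Z};\A)$ with $\sigma_\chi:=\chi\circ\tau$ a $\T$-valued $2$-cocycle, and each fibre is symmetric because $\G/{\sf Z}$ is TCP-rigidly symmetric. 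For self-adjoint $f$ the fibres $f_\chi$ have real spectrum, and invertibility of $f-\lambda$ in $\ell^1(\G;\A)$ for $\lambda\notin\mathbb R$ would be recovered from fibrewise invertibility using compactness of $\widehat{\sf Z}$ together with the spectral invariance of $\ell^1({\sf Z})$ in $C(\widehat{\sf Z})$ (Wiener's lemma). This continuous-field step, not any algebraic manipulation, is where the real work lies.

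For 1, Gromov's theorem \cite{Gr} reduces the polynomial-growth case to an arbitrary finite extension of a nilpotent group; replacing the finite-index nilpotent subgroup by its normal core (still normal, nilpotent and of finite index) I may assume ${\sf N}\trianglelefteq\G$ is nilpotent with ${\sf F}:=\G/{\sf N}$ finite. Nilpotent discrete groups are rigidly symmetric \cite[Cor.\,6]{Po} (alternatively, iterate part 2 up the upper central series, each step being a central extension with abelian kernel), so $\ell^1({\sf N};\A)$ is symmetric. Exactly as above a section identifies $\ell^1(\G;\A)$ with a twisted crossed product $\ell^1_{\beta,\eta}\big({\sf F};\ell^1({\sf N};\A)\big)$ of this symmetric algebra by the finite group ${\sf F}$. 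Here no disintegration is needed: a twisted crossed product of a symmetric Banach $^*$-algebra $\D$ by a finite group embeds as a closed $^*$-subalgebra of the matrix algebra $M_{|{\sf F}|}(\D)$ through its regular representation, and symmetry is preserved both under matrix amplification and under passage to closed $^*$-subalgebras. Hence $\ell^1(\G;\A)$ is symmetric, $\G$ is rigidly symmetric, and Theorem \ref{monaco} upgrades this to TCP-rigid symmetry.
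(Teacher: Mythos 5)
Your first reduction is exactly the paper's: by Theorem \ref{monaco}, TCP-rigid symmetry and rigid symmetry coincide for a discrete group, so everything hinges on showing that $\ell^1(\G;\A)\cong\ell^1(\G)\widehat\otimes\A$ is symmetric for every $C^*$-algebra $\A$\,. But at that point the paper simply quotes Leptin--Poguntke: \cite[Cor. 3]{LP} states that finite extensions of discrete nilpotent groups are rigidly symmetric, and \cite[Th. 7]{LP} states precisely that $\G$ is rigidly symmetric whenever a central quotient $\G/{\sf Z}$ is. You instead set out to re-prove these two group-theoretic facts, and that is where your argument has a genuine gap.

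The gap is in part 2. After the (correct) identification $\ell^1(\G;\A)\cong\ell^1_{{\sf id},\eta}\big(\G/{\sf Z};\ell^1({\sf Z};\A)\big)$, you rightly observe that Theorem \ref{monaco} cannot be applied with the Banach $^*$-algebra coefficient $\ell^1({\sf Z};\A)$ (the theorem requires a $C^*$-coefficient), and that passing to $\mathscr B=C^*({\sf Z})\otimes\A$ only exhibits $\ell^1(\G;\A)$ as a \emph{dense, non-closed} $^*$-subalgebra of the symmetric algebra $\ell^1_{{\sf id},\eta}(\G/{\sf Z};\mathscr B)$ --- and symmetry does not descend to dense subalgebras (compare $\ell^1$ of a free group sitting densely in its reduced group $C^*$-algebra). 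Your proposed repair --- evaluating at characters $\chi\in\widehat{\sf Z}$ and gluing fibrewise inverses using compactness of $\widehat{\sf Z}$ and Wiener's lemma --- is only a sketch: the claim that invertibility of every fibre $f_\chi$ in $\ell^1_{\sigma_\chi}(\G/{\sf Z};\A)$ forces invertibility of $f$ in $\ell^1(\G;\A)$ is exactly the hard content of \cite[Th. 7]{LP}, and you acknowledge yourself that this is ``where the real work lies'' without carrying it out. So part 2 is not proved. Part 1 is closer to complete, but it too rests on two asserted-but-unproved lemmas: that $\ell^1_{\beta,\eta}({\sf F};\D)$ embeds as a closed $^*$-subalgebra of $M_{|{\sf F}|}(\D)$ via a regular representation (the formula needs cocycle corrections, since $\eta$ is not central-valued when the kernel ${\sf N}$ is merely nilpotent rather than central), and that symmetry survives matrix amplification --- true, but not elementary: it is a special case of Kugler's theorem \cite[Th. 1]{Ku}, or can be sidestepped entirely by noting $M_n\big(\ell^1({\sf N};\A)\big)\cong\ell^1\big({\sf N};M_n(\A)\big)$ and invoking rigid symmetry of the nilpotent group ${\sf N}$ \cite[Cor. 6]{Po} with the coefficient $C^*$-algebra $M_n(\A)$\,. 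The short route, and the paper's route, is to quote \cite{LP} for both group-theoretic facts and let Theorem \ref{monaco} do the rest.
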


\begin{proof}
1. For the first assertion, we use once again Theorem \ref{monaco} and invoke \cite[Cor. 3]{LP} for rigid symmetry of finite extensions of discrete nilpotent groups.

\medskip
2. The second assertion is a consequence of Theorem \ref{monaco} and \cite[Th. 7]{LP}.
\end{proof}

The next result follows directly from Lemma \ref{danemarca}, so it relies only on a symmetry assumption.

\begin{Proposition}\label{podolia}
If the discrete group $\,\G$ is symmetric and the twisted $C^*$-dynamical system $(\A,\alpha,\o)$ admits a covariant representation $(\mathscr H,r,U)$ with $r$ faithful and $r(\A)U(\G)$ contained in a type I $\,C^*$-algebra $\mathscr B\subset\mathbb B(\mathscr H)$\,, then $\ell^1_{\alpha,\o}(\G;\A)$ is a symmetric Banach $^*$-algebra.
\end{Proposition}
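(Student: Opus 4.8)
The plan is to deduce the statement from Lemma~\ref{danemarca} applied with $\L(\G)=\ell^1(\G)$ and, crucially, with the \emph{type I} algebra $\mathscr B$ supplied by the hypothesis. We are given a covariant representation $(\mathscr H,r,U)$ with $r$ faithful and $r(\A)U(\G)\subset\mathscr B$, and Lemma~\ref{danemarca} permits $\mathscr B$ to be any $C^*$-subalgebra of $\mathbb B(\mathscr H)$ containing $r(\A)U(\G)$; so I would run the lemma with precisely this $\mathscr B$. It then furnishes an isometric $^*$-morphism
$$
\th:\ell^1_{\alpha,\o}(\G;\A)\to\ell^1(\G;\mathscr B)\,,\qquad(\th f)(x)=r[f(x)]U(x)\,.
$$
Since $\th$ is isometric and its source is complete, its range is a \emph{closed} $^*$-subalgebra of $\ell^1(\G;\mathscr B)$, and $\ell^1_{\alpha,\o}(\G;\A)$ is isometrically $^*$-isomorphic to it.

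Next I would reduce symmetry of $\ell^1_{\alpha,\o}(\G;\A)$ to symmetry of the ambient algebra. Exactly as in the proof of Theorem~\ref{monaco}.1, symmetry of a Banach $^*$-algebra is inherited by its closed $^*$-subalgebras \cite[Th.~11.4.2]{Pa2}; hence it suffices to show that $\ell^1(\G;\mathscr B)$ is symmetric. Recalling the identification $\ell^1(\G;\mathscr B)\cong\ell^1(\G)\widehat\otimes\mathscr B$ from \cite[1.10.11]{Pa1}, the whole matter is reduced to the single question: is $\ell^1(\G)\widehat\otimes\mathscr B$ symmetric when $\G$ is symmetric and $\mathscr B$ is type I?

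The answer is affirmative and constitutes the heart of the argument; I would invoke the permanence result, going back to Leptin--Poguntke \cite{LP} (see also \cite{Po}), that the projective tensor product of a symmetric group algebra with a type I $C^*$-algebra is again symmetric. This is precisely the step I expect to be the main obstacle, and it explains why the hypothesis cannot be weakened to an arbitrary $\mathscr B$: although symmetry passes to closed $^*$-ideals and to quotients, it is \emph{not} in general inherited by extensions, so one cannot simply climb a composition series of $\mathscr B$. The type I (equivalently GCR) hypothesis is what makes the tensor product tractable, its subquotients being of continuous-trace type, locally of the form $C(X)\otimes\mathbb K$, for which $\ell^1(\G)\widehat\otimes(-)$ reduces to $C(X)$-valued functions and matrix amplifications of $\ell^1(\G)$, both of which preserve symmetry; it is the control this structure gives over the whole algebra, rather than over the pieces in isolation, that delivers the claim. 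Granting this permanence, combining the three steps proves the Proposition, and no action $\alpha$ or cocycle $\o$ enters beyond its role inside Lemma~\ref{danemarca}.
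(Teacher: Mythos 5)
Your proposal follows the paper's own proof essentially verbatim: embed $\ell^1_{\alpha,\o}(\G;\A)$ into $\ell^1(\G;\mathscr B)\cong\ell^1(\G)\widehat\otimes\mathscr B$ via Lemma~\ref{danemarca} with the type I algebra $\mathscr B$ from the hypothesis, use inheritance of symmetry by closed $^*$-subalgebras, and invoke the permanence of symmetry under projective tensoring of a symmetric Banach $^*$-algebra with a type I $C^*$-algebra. The only discrepancy is bibliographic: the paper attributes that permanence result to Kugler \cite[Th.~1]{Ku} rather than to Leptin--Poguntke \cite{LP}; your heuristic sketch of why it holds is not needed, since both you and the paper take it as an external input.
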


\begin{proof}
The projective tensor product of a symmetric Banach $^*$-algebra and a type I $\,C^*$-algebra is a symmetric Banach $^*$-algebra \cite[Th. 1]{Ku}. Thus $\ell^1(\G;\mathscr B)\cong \ell^1(\G)\widehat\otimes\mathscr B$ is symmetric if $\G$ is (only) symmetric and $\mathscr B$ is type I and then Lemma \ref{danemarca} finishes the proof.
\end{proof}

It is not easy to exploit this result in an explicit non-trivial way. If $\G$ is amenable and the twisted crossed product $\A\!\rtimes_\alpha^\o\!\G$ happens to be type I, it can be used in Proposition \ref{podolia}. But criteria for such a property are difficult to give even if $\o$ is trivial; we refer to \cite[7.5]{Wi} for a discusssion.

To illustrate this Proposition with simple but non-trivial examples, let us take $\A=\mathbb C$ and (thus) $\alpha_x={\sf id}_\mathbb C$ for every $x\in\G$\,. Then the $2$-cocycle (in this case also called {\it multiplier}) $\o$ is $\mathbb T$-valued and $\ell^1_{{\sf id},\o}(\G;\mathbb C)=:\ell^1_\o(\G)$ is the $\o$-twisted $\ell^1$-algebra of the  group $\G$\,. The isometric $^*$-morphism $\th$ defined in (\ref{kamchatka}) reads now 
$$
\th:\ell^1_\o(\G)\rightarrow \ell^1(\G;\mathscr B)\,,\quad(\th f)(x):=f(x)U(x)\,,
$$
where $U:\G\to\U(\mathscr H)$ is an $\o$-projective representation, i.e. it satisfies
$$
U(x)U(y)=\o(x,y)U(xy)\,,\quad\forall\,x,y\in\G
$$
and we choose $\mathscr B$ (say) to be the $C^*$-subalgebra of $\mathbb B(\mathscr H)$ generated by $U(\G)$\,. Using now Proposition \ref{podolia} one gets

\begin{Corollary}\label{coreea}
If the discrete group $\G$ is symmetric and admits an $\o$-projective representation $U$ generating a type I $\,C^*$-algebra, then $\ell^1_\o(\G)$ is symmetric. In particular, this happens when $\G$ is amenable and symmetric and the twisted group $C^*$-algebra $C^*_\o(\G)$ is type I.
\end{Corollary}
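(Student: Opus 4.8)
The first assertion is an immediate specialization of Proposition \ref{podolia} to the data already assembled above. The plan is to take $\A=\mathbb C$ with trivial action $\alpha={\sf id}$, let $r:\mathbb C\to\mathbb B(\mathscr H)$ be the (faithful) scalar representation $r(\lambda):=\lambda\,I_{\mathscr H}$, and let $U$ be the given $\o$-projective representation. First I would verify that $\big(\mathscr H,r,U\big)$ is a covariant representation of $(\mathbb C,{\sf id},\o)$ in the sense of (\ref{cipru}): since $\o(x,y)\in\mathbb T$ is a scalar, one has $r[\o(x,y)]=\o(x,y)I_{\mathscr H}$, so the first covariance relation collapses to $U(x)U(y)=\o(x,y)U(xy)$, which is precisely the defining property of $U$; and because $r(\varphi)=\varphi I_{\mathscr H}$ is central while $\alpha_x={\sf id}$, the second relation reads $U(x)r(\varphi)U(x)^*=\varphi\,U(x)U(x)^*=\varphi I_{\mathscr H}=r[\alpha_x(\varphi)]$, which is automatic. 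Then $r(\A)U(\G)=\mathbb C\,U(\G)$ is contained in the $C^*$-algebra $\mathscr B$ generated by $U(\G)$, which is type I by hypothesis, and Proposition \ref{podolia} yields that $\ell^1_{{\sf id},\o}(\G;\mathbb C)=\ell^1_\o(\G)$ is symmetric.

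For the \emph{In particular} statement the plan is to produce one concrete $\o$-projective representation whose generated $C^*$-algebra is type I and then quote the first part. The natural choice is the left regular $\o$-projective representation on $\ell^2(\G)$, namely the representation $L^\pi_\o$ of Remark \ref{ulei} specialized to $\A=\mathbb C$ and $\pi={\sf id}_{\mathbb C}$, which I denote $L_\o$. It generates the reduced twisted group $C^*$-algebra, i.e.\ the $C^*$-subalgebra $C^*_{\o,r}(\G)\subset\mathbb B\big(\ell^2(\G)\big)$ generated by $L_\o(\G)$. The step that uses amenability is the passage from the full to the reduced algebra: for amenable $\G$ the canonical surjection $C^*_\o(\G)\twoheadrightarrow C^*_{\o,r}(\G)$ is an isomorphism, so the hypothesis that $C^*_\o(\G)$ is type I forces $C^*_{\o,r}(\G)$ to be type I as well. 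Applying the first part with $U=L_\o$ and $\mathscr B=C^*_{\o,r}(\G)$ then gives that $\ell^1_\o(\G)$ is symmetric.

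The only step that is not purely formal, and hence the one I expect to be the main obstacle, is the amenability identification $C^*_\o(\G)\cong C^*_{\o,r}(\G)$ in the twisted setting; this is the twisted analogue of the classical coincidence of the full and reduced group $C^*$-algebras for amenable groups, and is where amenability genuinely enters (see \cite{PR2}). Everything else is routine: the covariance check is a one-line computation, and the type I property transfers automatically along a $^*$-isomorphism. I note in passing that amenability can in fact be bypassed---since $\G$ is discrete, the canonical unitaries $u_x\in C^*_\o(\G)$ (the images of the point masses $\delta_x$) already form an $\o$-projective representation generating $C^*_\o(\G)$, so for any faithful $\Pi:C^*_\o(\G)\hookrightarrow\mathbb B(\mathscr H)$ the unitaries $U(x):=\Pi(u_x)$ generate $\Pi\big(C^*_\o(\G)\big)\cong C^*_\o(\G)$, which is type I, and the first part applies directly; the regular-representation route above is simply the more constructive one, matching the stated hypotheses.
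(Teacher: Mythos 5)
Your proof is correct and follows essentially the paper's own route: the paper obtains the first assertion in exactly the same way, by specializing Proposition \ref{podolia} to $\A=\mathbb C$ with trivial action, $r(\lambda)=\lambda\,I_{\mathscr H}$, and $\mathscr B$ the $C^*$-subalgebra of $\mathbb B(\mathscr H)$ generated by $U(\G)$, while the \emph{in particular} clause is left implicit there and is intended precisely as you complete it, amenability entering only through the identification of the full and reduced twisted group $C^*$-algebras so that the regular $\o$-projective representation generates a copy of the type I algebra $C^*_\o(\G)$. Your closing observation that amenability can in fact be bypassed --- since the canonical unitaries of $C^*_\o(\G)$ under any faithful representation already form an $\o$-projective representation generating a type I $C^*$-algebra --- is also correct and shows the amenability hypothesis is a convenience rather than a necessity.
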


Conditions for a discrete group to have at least one type I $\o$-representation are in \cite[Th. 1]{Kl}, to which we send the interested reader; see also \cite{H}. 

\medskip
Lemma \ref{centrafricana} tells us that the Beurling algebra $\ell^{1,\nu}(\G)$ is an admissible algebra if $\nu$ is a submultiplicative symmetric weight. 

\begin{Corollary}\label{japita}
Let $\G$ be a rigidly symmetric amenable discrete group and $\nu$ a sumultiplicative symmetric weight. 
Assume that there exists a generating subset $V$ of $\,\G$ containing the unit ${\sf e}$ such that 
\begin{enumerate}
\item
the following uGRS (uniform Gelfand-Raikov-Shilov) condition holds:
\begin{equation}\label{ugrs}
\lim_{n\rightarrow\infty}\sup_{x_1,\dots,x_n\in V}\nu(x_1\cdots x_n)^{1/n}=1\,,
\end{equation} 
\item
for some finite constant $C$ one has for any  $n\in\N$ 
\begin{equation}\label{magaoaie}
\sup_{x\in V^n\setminus V^{n-1}}\nu(x)\le C\!\inf_{x\in V^n\setminus V^{n-1}}\nu(x)\,.
\end{equation}
\end{enumerate}
Then $\ell^{1,\nu}_{\alpha,\o}(\G;\A)$ is a symmetric Banach $^*$-algebra for every twisted $C^*$-dynamical system $(\A,\alpha,\o)$\,.
\end{Corollary}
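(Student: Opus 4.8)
The plan is to strip off the weight first and then borrow the symmetry that rigid symmetry already supplies for the unweighted algebra. By Lemma~\ref{centrafricana} the Beurling algebra $\ell^{1,\nu}(\G)$ is an admissible algebra, so Theorem~\ref{monaco} applies and it is enough to prove that $\G$ is rigidly $\ell^{1,\nu}$-symmetric, i.e. that $\ell^{1,\nu}(\G;\B)$ is a symmetric Banach $^*$-algebra for every $C^*$-algebra $\B$ (the trivial twisted system $(\B,\mathsf{id},1)$). At the outset I record that $\ell^{1,\nu}(\G;\B)$ is a dense $^*$-subalgebra of $\ell^1(\G;\B)$, since the finitely supported functions lie in both spaces and the $\ell^{1,\nu}$-norm dominates the $\ell^1$-norm.

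Next I would bring in the rigid symmetry of $\G$, which by its very definition makes $\ell^1(\G;\B)\cong\ell^1(\G)\widehat\otimes\B$ a symmetric Banach $^*$-algebra. Granting for the moment that $\ell^{1,\nu}(\G;\B)$ is an inverse closed subalgebra of $\ell^1(\G;\B)$ in the sense of Definition~\ref{derrin}, spectral permanence gives $\sigma_{\ell^{1,\nu}(\G;\B)}(f)=\sigma_{\ell^1(\G;\B)}(f)$ for every $f$; for self-adjoint $f$ this common spectrum is real by symmetry of $\ell^1(\G;\B)$, so $\ell^{1,\nu}(\G;\B)$ is symmetric, which is precisely rigid $\ell^{1,\nu}$-symmetry. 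Equivalently, one may argue by transitivity of inverse closedness together with the reducedness observation in the proof of Theorem~\ref{monaco}(2), the two algebras sharing the same enveloping $C^*$-algebra.

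Everything therefore hinges on the spectral invariance of $\ell^{1,\nu}(\G;\B)$ inside $\ell^1(\G;\B)$, and this is where the two hypotheses on $\nu$ and the amenability of $\G$ are meant to be used; I expect it to be the main obstacle. The intended route is the Gelfand-Raikov-Shilov / Brandenburg circle of ideas, as developed in \cite{FGL,GL1,GL2,GL3,Kur}. Condition~(\ref{magaoaie}) asserts that $\nu$ is constant, up to the fixed factor $C$, on each shell $V^n\setminus V^{n-1}$, so I would first replace $\nu$ by an equivalent \emph{radial} weight depending only on the word length $|x|_V:=\min\{n:x\in V^n\}$, without changing the algebra or, up to constants, its norm. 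The uGRS condition~(\ref{ugrs}) then says exactly that this radial weight grows subexponentially along products of generators. Feeding this into the standard spectral-radius estimate, one obtains for self-adjoint $f$ the equality $\lim_{n}\|f^n\|_{\ell^{1,\nu}(\G;\B)}^{1/n}=\lim_{n}\|f^n\|_{\ell^1(\G;\B)}^{1/n}$ of spectral radii, and a Brandenburg-type iteration upgrades this to equality of full spectra, hence to inverse closedness; amenability enters here in the usual way, ensuring that the reduced and full completions against which these radii are computed coincide. The delicate points I anticipate are carrying the estimates through with genuine $C^*$-algebra coefficients $\B$ rather than scalars, and making the passage to the radial weight quantitatively uniform in $n$ --- which is exactly why the uniform conditions~(\ref{ugrs}) and~(\ref{magaoaie}) are imposed rather than their pointwise analogues.
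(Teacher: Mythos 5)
Your opening reduction is exactly the paper's proof: Lemma~\ref{centrafricana} makes $\ell^{1,\nu}(\G)$ an admissible algebra, and Theorem~\ref{monaco} reduces the Corollary to the rigid $\ell^{1,\nu}$-symmetry of $\G$, i.e.\ to the symmetry of $\ell^{1,\nu}(\G;\B)$ for every $C^*$-algebra $\B$. At that point the paper stops: it invokes \cite[Sect.~5]{FGL}, where precisely this symmetry is proved from hypotheses 1 and 2 (together with rigid symmetry and amenability); that citation is the entire remaining content of the proof. Had you done the same, your argument would coincide with the paper's. Instead you set out to re-derive the FGL result, and it is in that re-derivation that genuine gaps appear, which you partly acknowledge yourself.

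Two steps are missing. First, the spectral-radius identity $r_{\ell^{1,\nu}}(f)=r_{\ell^{1}}(f)$ for self-adjoint $f$ follows from the uGRS condition~(\ref{ugrs}) by the support argument ($\supp f\subset V^{m}$ forces $\supp f^{n}\subset V^{mn}$, whence $\|f^{n}\|_{\ell^{1,\nu}}\le \sup_{V^{mn}}\nu\cdot\|f^{n}\|_{\ell^{1}}$) only when $f$ is \emph{finitely supported}. Symmetry is a statement about all self-adjoint elements, and neither the radius identity nor reality of spectra passes automatically from a dense $^*$-subalgebra to the whole algebra; extending the identity to arbitrary $f\in\ell^{1,\nu}(\G;\B)$ is the technical heart of \cite[Sect.~5]{FGL} --- it is exactly where the shell condition~(\ref{magaoaie}) is used, via a decomposition of $f$ into pieces supported on the shells $V^{n}\setminus V^{n-1}$ --- and your sketch defers this rather than carrying it out. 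Second, the implication ``equality of spectral radii $\Rightarrow$ equality of full spectra $\Rightarrow$ inverse closedness'' is not a formal one, and a ``Brandenburg-type iteration'' is not what delivers it: Brandenburg's trick is a device for \emph{proving} radius identities, not for upgrading them. Equality of spectral radii, even for every element, does not imply spectral invariance: for the disc algebra inside $C(\mathbb T)$ all spectral radii coincide by the maximum principle, yet $z$ is invertible only in the larger algebra. What makes the upgrade work here is a Hulanicki-type lemma exploiting the hermiticity of the symmetric algebra $\ell^{1}(\G;\B)$: for $f=f^{*}$ and $\lambda\notin\sigma_{\ell^{1}}(f)$ one inverts $g:=(f-\lambda)(f-\bar\lambda)$ in $\ell^{1,\nu}(\G;\B)$ through the Neumann series $g^{-1}=\varepsilon\sum_{n\ge 0}(1-\varepsilon g)^{n}$, whose convergence is governed by the $\ell^{1,\nu}$-spectral radius of the \emph{self-adjoint} element $1-\varepsilon g$ (this is the only place the radius identity enters, and note it is needed for non-finitely-supported elements), and then $(f-\lambda)^{-1}=(f-\bar\lambda)\,g^{-1}$. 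A minor further point: amenability does not act through ``reduced equals full completion'' in comparing the two Banach algebras --- no completions occur in that comparison; it is simply a hypothesis of the cited FGL theorem. So your skeleton is the right one (it is FGL's own strategy), but as written the core of the Corollary, rigid $\ell^{1,\nu}$-symmetry, remains unproved: either cite \cite[Sect.~5]{FGL} for it, as the paper does, or supply the shell decomposition and the Hulanicki argument in full.
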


\begin{proof}
From Theorem \ref{monaco} we know that $\G$ is TCP-rigidly $\ell^{1,\nu}$-symmetric whenever it is rigidly $\ell^{1,\nu}$-symmetric. 
The problem of the symmetry of $\ell^{1,\nu}(\G;\mathscr B)$ for a discrete group and an arbitrary $C^*$-algebra $\mathscr B$ has been discussed and solved in \cite[Sect. 5]{FGL}, relying on the assumptions 1 and 2. 
\end{proof}

\begin{Example}\label{nct}
Non-commutative tori \cite{Bo,GL3} {\rm are obtained setting $\,\G:=\mathbb Z^n$\,, $\L:=\l^1$ and $\A:=\mathbb C$\,. Thus the two-cocycle $\o:\mathbb Z^n\times\mathbb Z^n\rightarrow\mathbb T$ is a multiplier, the action $\alpha$ must be trivial, the twisted crossed product is the $\ell^1$-twisted group algebra $\ell^1_\o(\mathbb Z^n)$ with enveloping $C^*$-algebra $C^*_\o(\mathbb Z^n)$\,. Up to cohomology, the multipliers of $\Z^n$ are given by skew-symmetric matrices $(\th_{x,y})_{x,y\in\Z^n}$ through 
\begin{equation}\label{be}
\o_\th(x,y)=\exp\big(2\pi i\th_{x,y}\big)\,,\quad\forall\,x,y\in\Z^n\,.
\end{equation} 
If $\,\o=1$ one deals with the ($\l^1$- and $C^*$-) group algebras of $\,\mathbb Z^n$, which are commutative; one has $C^*(\mathbb Z^n)\cong C(\mathbb T^n)$\, by using the Fourier transform.

Unconventionally, for every admissible (convolution) algebra $\L(\mathbb Z^n)$\,, the Banach $^*$-algebra $\L_\o(\mathbb Z^n)$ may be called {\it the $\o$-noncommutative torus of $\L$-decay}. Our results imply that it is symmetric and inverse-closed in its enveloping $C^*$-algebra (and in its faithful representations) if $\mathbb Z^n$ is rigidly $\L$-symmetric. This holds if $\L=\ell^1$ since $\Z^n$ is Abelian. It also holds for $\L=\ell^{1,\nu}$ if $\nu$ is a GRS-weight.

Such results (and others) have been first obtained in \cite{GL1,GL2,GL3}. As outlined in \cite{GL3}, the symmetry of $\ell^1_\o(\mathbb Z^n)$ can be proved by embedding it isometrically into the non-commutative convolution algebra $L^1(\G_\o)$\,, where $G_\o$ is the central group extension of the $1$-dimensional torus $\T$ (the unitary group of $\mathbb C$) by $\Z^n$ associated to the multiplier $\o$\,. Such a strategy is impossible in more complicated situations; note for example that the unitary group of an infinite-dimensional $C^*$-algebra $\A$ is not locally compact, so it does not posses a Haar measure.

One gets a faithful representation of $C^*_\o(\mathbb Z^n)$ by starting as in Remark \ref{ulei} with the one-dimensional representation ${\rm i}$ of $\A=\mathbb C$ in the Hilbert space $\mathbb C$\,.
One gets the integrated form
\begin{equation}\label{guagua}
{\rm i}\!\rtimes L^{\rm i}_\o:\ell^1_\o(\mathbb Z^n)\rightarrow\mathbb B\big[\ell^2(\Z^n)\big]\,,\quad (i\!\rtimes L^{\rm i}_\o)f=\sum_{x\in\mathbb Z^n}\!f(x)L^{\rm i}_\o(x)
\end{equation}
which has the form of a twisted convolution
\begin{equation}\label{guaguita}
\big([({\sf i}\!\rtimes L^{\sf i}_\o)f]v\big)(z)=\sum_{y\in\Z^n}\!\o(-z,z-y)f(z-y)v(y)\,.
\end{equation}
One can state an inverse-closedness result in terms of this representation.


}
\end{Example}

\begin{Example}\label{dorinet}
{\rm We illustrate now Fredholm inverse closedness. One starts with an amenable discrete rigidly symmetric group $\G$ and a closed unital $^*$-subalgebra $\A(\G)$ of $\ell^\infty(\G)$ containing the ideal 
\begin{equation}\label{drorin}
c_0(\G):=\big\{\varphi:\G\rightarrow\mathbb C\,\big\vert\,\varphi(x)\underset{x\rightarrow\infty}{\longrightarrow}0\big\}
\end{equation} 
and stable under translations: if $\varphi\in\A(\G)$ and $x\in\G$ then $\big[\alpha_x(\varphi)\big](\cdot):=\varphi(x^{-1}\cdot)\in\A(\G)$\,. Let also $\,\o:\G\times\G\rightarrow\A(\G)$ be a $2$-cocyle with respect to $\alpha$\,. The formula 
\begin{equation}\label{dorinache}
[\Pi(f)u](x):=\sum_{y\in\G}f\big(xy^{-1};x\big)\o\big(x^{-1},xy^{-1};{\sf e}\big)u(y)
\end{equation}
defines a $^*$-representations $\Pi:\ell^1_{\alpha,\o}(\G;\A(\G))\rightarrow\mathbb B\big[\ell^2(\G)\big]$ that extends to a faithful $^*$-representation of $\A(\G)\!\rtimes_\alpha^\o\!\G$ (see Section \ref{firtoinog}). It is known that $\Pi\big[c_0(\G)\!\rtimes_\alpha^\o\!\G\big]=\mathbb K\big[\ell^2(\G)\big]$\,. Hence we are in the framework of Section \ref{trowak} and, by Corollary \ref{dorinel}, $\,\Pi\big[\ell^1(\G;\A(\G))\big]$ is a Fredholm inverse closed Banach $^*$-algebra of operators in $\ell^2(\G)$\,. This can be applied to $\A(\G)=\ell^\infty(\G)$\,. In Corollary \ref{papita} we will give an interpretation in terms of twisted matrix operators.
}
\end{Example}

\section{The twisted kernel calculus}\label{furtin}

\subsection{Algebras of twisted kernels}\label{toww}

As before, we are given a twisted $C^*$-dynamical system $(\A,\alpha,\o)$ with discrete group $\G$\,. 
When defining algebras of kernels, for simplicity, {\it we are going to assume that the $2$-cocycle $\o$ is center-valued}; consequently (\ref{luxemburg}) will read simply $\alpha_x\circ\alpha_y=\alpha_{xy}$ for every $x,y\in\G$\,. The general case can be treated, but some formulae are more complicated. The $2$-cocycle identity (\ref{lichtenstein}) will be needed below in the form
\begin{equation}\label{ucraina}
\alpha_{s^{-1}}\!\big[\o(m,n)\big]\alpha_{s^{-1}}\!\big[\o(mn,r)\big]=\alpha_{s^{-1}m}\!\big[\o(n,r)\big]\alpha_{s^{-1}}\!\big[\o(m,nr)\big]\,.
\end{equation}

For $\A$-valued kernels on $\G$\,, i.e. functions $K:\G\times\G\rightarrow\A$\,, one defines (formally) the composition
\begin{equation}\label{suediaa}
(K\bullet_{\alpha,\o}\!L)(x,y):=\sum_{z\in\G}K(x,z)L(z,y)\,\alpha_{x^{-1}}\!\big[\o(xz^{-1},zy^{-1})]
\end{equation}
and the involution
\begin{equation}\label{norvegiaa}
K^{\bullet_{\alpha,\o}}(x,y):=\alpha_{x^{-1}}\!\big[\o(xy^{-1},yx^{-1})^*\big]\,K(y,x)^*\,.
\end{equation}

\begin{Definition}\label{uzbechistan}
Let $\L(\G)$ be an admissible algebra. Let us denote by $\mathscr K_{\alpha,\o}^\L(\G\times\G;\A)$ the set of all functions $K:\G\times\G\rightarrow\A\,$ for which $\p\!K\!\p_{\K^\L}$ is finite; we set
\begin{equation}\label{ingusia}
\p\!K\!\p_{\K^\L}:=\inf\left\{\p\!k\!\p_{\L}\,\mid\, \p\! K(x,y)\!\p_\A\,\le\,|k(xy^{-1})|\,,\ \,\forall\,x,y\in\G\right\}.
\end{equation}
The elements of $\K_{\alpha,\o}^\L(\G\times\G;\A)$ will be called {\rm $\A$-valued convolution-dominated kernels (or matrices) of type $\L$}\,. 
\end{Definition}

The space $\L(\G)$ describes the type of off-diagonal decay possessed by the elements of $\mathscr K_{\alpha,\o}^\L(\G\times\G;\A)$\,. If $\L(\G)=\ell^{1,\nu}(\G)$ we prefer the notation $\K_{\alpha,\o}^\nu(\G\times\G;\A)$ and if $\L(\G)=\ell^{1}(\G)$ we skip the upper index.
The lower index $(\alpha,\o)$ is only justified by the fact that  on $\K_{\alpha,\o}^\L(\G\times\G;\A)$ we are going to consider the algebraic structure defined by (\ref{suediaa}) and (\ref{norvegiaa}). The starting point in defining Banach spaces through norms of the form (\ref{ingusia}) seems to be \cite{GKW}, in which the group $\G$ is $\mathbb Z^n$\,, one has $\L=\ell^1$ and the $2$-cocycle is absent.

\begin{Remark}\label{eritreea}
{\rm Given an element $K$ of the space $\K_{\alpha,\o}^\L(\G\times\G;\A)$\,, let us introduce the notation $\kappa(x):=\sup_{z\in\G}\!\p\!K(z,x^{-1}z)\!\p_\A$\,. It is easy to show that $\kappa\in\L(\G)$ and that $\p\!K\!\p_{\K^\L}\,=\,\p\!\kappa\!\p_{\L}$\,. 
}
\end{Remark}

\begin{Proposition}\label{osetia}
$\left(\K_{\alpha,\o}^\L(\G\times\G;\A),\bullet_{\alpha,\o},^{\bullet_{\alpha,\o}},\p\!\cdot\!\p_{\K^\L}\right)$ is a Banach $^*$-algebra.
\end{Proposition}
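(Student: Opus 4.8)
The plan is to reduce every assertion to the underlying admissible algebra $\L(\G)$ via the dominating functions of Remark \ref{eritreea}, exploiting throughout that each factor $\alpha_{x^{-1}}\!\big[\o(\cdot,\cdot)\big]$ appearing in (\ref{suediaa}) and (\ref{norvegiaa}) is a unitary multiplier, so its presence never alters an $\A$-norm. I will treat the Banach-space structure first, then the two easy analytic facts (submultiplicativity of the product and isometry of the involution), and finally the purely algebraic identities, which are the real work.

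For the Banach-space part I would set up an isometric linear identification. Putting $\tilde K(w)(z):=K(z,w^{-1}z)$ defines a linear bijection from $\A$-valued kernels onto functions $\G\to\ell^\infty(\G;\A)$, with inverse $K(x,y)=\tilde K(xy^{-1})(x)$. By Remark \ref{eritreea} one has $\p\!\tilde K(w)\!\p_{\ell^\infty(\G;\A)}=\kappa_K(w)$, hence
$$
\p\!K\!\p_{\K^\L}\,=\,\p\!\kappa_K\!\p_{\L}\,=\,\p\!\tilde K\!\p_{\L(\G;\ell^\infty(\G;\A))}\,,
$$
so that $\K_{\alpha,\o}^\L(\G\times\G;\A)$ is isometrically isomorphic, as a normed space, to the vector-valued space $\L\big(\G;\ell^\infty(\G;\A)\big)$ built as in (\ref{tailanda}) over the Banach space $\ell^\infty(\G;\A)$. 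In particular $\p\!\cdot\!\p_{\K^\L}$ is a genuine (definite) norm and, since such vector-valued $\L$-spaces are complete by the same routine argument that makes $\L(\G;\A)$ a Banach space, the space is complete.

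Next I would settle the two analytic facts. Since the cocycle factor is unitary, the triangle inequality in (\ref{suediaa}) gives $\p\!(K\bullet_{\alpha,\o}L)(x,y)\!\p_\A\le\sum_z\p\!K(x,z)\!\p_\A\,\p\!L(z,y)\!\p_\A$; bounding the factors by $\kappa_K,\kappa_L$ and substituting $u=xz^{-1}$ identifies the right-hand side with $(\kappa_K\star\kappa_L)(xy^{-1})$. As $\kappa_K\star\kappa_L\in\L(\G)\subset\ell^1(\G)$ the defining series converges absolutely in $\A$, the product lies in $\K_{\alpha,\o}^\L$, and solidity together with submultiplicativity of $\p\!\cdot\!\p_{\L}$ yields
$$
\p\!K\bullet_{\alpha,\o}L\!\p_{\K^\L}\,\le\,\p\!\kappa_K\star\kappa_L\!\p_{\L}\,\le\,\p\!K\!\p_{\K^\L}\,\p\!L\!\p_{\K^\L}\,.
$$
The same unitarity remark gives $\p\!K^{\bullet_{\alpha,\o}}(x,y)\!\p_\A=\p\!K(y,x)\!\p_\A$, so the dominating function of $K^{\bullet_{\alpha,\o}}$ is $w\mapsto\kappa_K(w^{-1})=\kappa_K^\star(w)$; because the involution of $\L(\G)$ is isometric, the kernel involution is isometric as well, and conjugate-linearity is immediate.

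What remains, and where I expect the only genuine obstacle to lie, are the algebraic identities: associativity of $\bullet_{\alpha,\o}$, the relation $(K\bullet_{\alpha,\o}L)^{\bullet_{\alpha,\o}}=L^{\bullet_{\alpha,\o}}\bullet_{\alpha,\o}K^{\bullet_{\alpha,\o}}$, and $\big(K^{\bullet_{\alpha,\o}}\big)^{\bullet_{\alpha,\o}}=K$. Here the standing assumption that $\o$ is center-valued is essential, since it turns (\ref{luxemburg}) into a genuine action $\alpha_x\alpha_y=\alpha_{xy}$ and lets the transported cocycle factors commute past elements of $\A$. I expect associativity to be the decisive computation: expanding $(K\bullet_{\alpha,\o}L)\bullet_{\alpha,\o}M$ and $K\bullet_{\alpha,\o}(L\bullet_{\alpha,\o}M)$ at a point and relabelling the two summation variables produces, on each side, a product of three $\alpha$-transported values of $\o$, and the rewritten $2$-cocycle identity (\ref{ucraina}) is precisely what matches them term by term. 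The two involution identities reduce, using (\ref{flandra}) and the consequence $\o(g,g^{-1})=\alpha_g\!\big[\o(g^{-1},g)\big]$ of (\ref{lichtenstein}), to cancellations of the form $u^*u=1$. Once these identities are established, combining them with the estimates above verifies every axiom of a Banach $^*$-algebra.
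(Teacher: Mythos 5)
Your treatment of the Banach-space structure and of the analytic estimates is sound, and the isometric identification $K\mapsto\tilde K$\,, $\tilde K(w)(z):=K(z,w^{-1}z)$\,, of $\K_{\alpha,\o}^\L(\G\times\G;\A)$ with $\L\big(\G;\ell^\infty(\G;\A)\big)$ is a useful addition: it delivers definiteness of the norm and completeness, two points the paper's proof passes over in silence (they are implicit in Remark \ref{eritreea}). Submultiplicativity of the product and isometry of the involution are obtained exactly as in the paper, by domination and solidity of $\L(\G)$\,.

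However, the algebraic identities are not a routine afterthought --- they constitute essentially the whole of the paper's proof --- and your sketch of them contains a real error. For associativity your expectation is correct in substance (though each side carries a product of two, not three, transported cocycle factors: after relabelling one must check $\alpha_{x^{-1}}[\o(xb^{-1},ba^{-1})]\,\alpha_{x^{-1}}[\o(xa^{-1},ay^{-1})]=\alpha_{b^{-1}}[\o(ba^{-1},ay^{-1})]\,\alpha_{x^{-1}}[\o(xb^{-1},by^{-1})]$\,, which is a single application of (\ref{ucraina}) with $m=xb^{-1}$, $n=ba^{-1}$, $r=ay^{-1}$, $s=x$). But the claim that the anti-multiplicativity $(K\bullet_{\alpha,\o}L)^{\bullet_{\alpha,\o}}=L^{\bullet_{\alpha,\o}}\!\bullet_{\alpha,\o}\!K^{\bullet_{\alpha,\o}}$ reduces, via (\ref{flandra}) and the inverse-pair identity $\o(g,g^{-1})=\alpha_g[\o(g^{-1},g)]$\,, to cancellations of the form $u^*u=1$ is false. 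Expanding both sides and using centrality to collect the unitary factors, the identity to be proved is precisely the paper's (\ref{egiptt}); it involves the cocycle evaluated at non-inverse pairs such as $(yz^{-1},zx^{-1})$ and $(xz^{-1},zy^{-1})$\,, which the inverse-pair identity cannot reach, and the paper needs two further applications of the full transported cocycle identity (\ref{ucraina}) to establish it. Only the remaining identity $\big(K^{\bullet_{\alpha,\o}}\big)^{\bullet_{\alpha,\o}}=K$ reduces in the way you describe (the inverse-pair identity at $g=xy^{-1}$\,, transported by $\alpha_{x^{-1}}$\,, gives $\alpha_{x^{-1}}[\o(xy^{-1},yx^{-1})]=\alpha_{y^{-1}}[\o(yx^{-1},xy^{-1})]$). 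So, as it stands, the verification of the $^*$-algebra axioms --- the core of the proposition --- is incomplete, and the mechanism you propose for one of them would not work.
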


\begin{proof}
Straightforwardly, if $\p\!K(x,y)\!\p_\A\,\le|k(xy^{-1})|$ and $\p\!L(x,y)\!\p_\A\,\le|l(xy^{-1})|$ everywhere, then 
$$
\p\!(K\bullet_{\alpha,\o} L)(x,y)\!\p_\A\,\le(|k|\star|l|)(xy^{-1})\,,\ \forall\,x,y\in\G\,.
$$ 
This shows immediately that $\mathscr K_{\alpha,\o}^\L(\G\times\G;\A)$ is stable under $\bullet_{\alpha,\o}$ and that 
$$
\p\!K\bullet_{\alpha,\o} L\!\p_{\K^\L}\,\le\,\p\!K\!\p_{\K^\L}\p\!L\!\p_{\K^\L}\,,\quad\ \forall\,K,L\in\mathscr K_{\alpha,\o}^\L(\G\times\G;\A)\,. 
$$
The map $^{\bullet_{\alpha,\o}}$ is a well-defined isometry, since 
$$
\p\!K(y,x)\!\p_\A\,\le |k(xy^{-1})|\ \Longleftrightarrow\ \p\!K^{\bullet_{\alpha,\o}}(x,y)\!\p_\A\,\le|k^*(xy^{-1})|
$$
for all $x,y\in\G$ and the involution $^*$ is $\p\!\cdot\!\p_{\L}$-isometric.

\medskip
\noindent
To show associativity, we rely on the identity
$$
\alpha_{x^{-1}}\!\big[\o(xb^{-1},ba^{-1})\big]\,\alpha_{x^{-1}}\!\big[\o(xa^{-1},ay^{-1})\big]=\alpha_{b^{-1}}\!\big[\o(ba^{-1},ay^{-1})\big]\,\alpha_{x^{-1}}\!\big[\o(xb^{-1},by^{-1})\big]\,,
$$
which follows from (\ref{ucraina}) setting $m=xb^{-1}$\,, $n=ba^{-1}$, $r=ay^{-1}$ and $s=x$\,. 

\medskip
\noindent
The map $^{\bullet_{\alpha,\o}}$ is involutive because of the identity
$$
\alpha_{x^{-1}}\!\big[\o(xy^{-1},yx^{-1})\big]=\alpha_{y^{-1}}\!\big[\o(yx^{-1},xy^{-1})\big],
$$
which follows by setting in (\ref{ucraina}) $m=xy^{-1}$, $n=yx^{-1}$, $r=xy^{-1}$, $s=x$ and then using (\ref{flandra}).

\medskip
The identity $(K\bullet_{\alpha,\o}\!L)^{\bullet_{\alpha,\o}}=L^{\bullet_{\alpha,\o}}\!\bullet_{\alpha,\o}\!K^{\bullet_{\alpha,\o}}$ is equivalent with
\begin{equation}\label{egiptt}
\begin{aligned}
\alpha_{x^{-1}}\!\big[\o(xy^{-1},yx^{-1})\big]&\,\alpha_{y^{-1}}\!\big[\o(yz^{-1},zx^{-1})\big]\,\alpha_{x^{-1}}\!\big[\o(xz^{-1},zy^{-1})\big]=\\
&\alpha_{x^{-1}}\!\big[\o(xz^{-1},zx^{-1})\big]\,\alpha_{z^{-1}}\!\big[\o(zy^{-1},yz^{-1})\big]\,.
\end{aligned}
\end{equation}
To prove (\ref{egiptt}) first notice that, by straightforward particularizations in (\ref{ucraina}), one can write
$$
\alpha_{x^{-1}}\!\big[\o(xz^{-1},zy^{-1})\big]\,\alpha_{x^{-1}}\!\big[\o(xy^{-1},yx^{-1})\big]=\alpha_{z^{-1}}\!\big[\o(zy^{-1},yx^{-1})\big]\,\alpha_{x^{-1}}\!\big[\o(xz^{-1},zx^{-1})\big]
$$
and this reduces (\ref{egiptt}) to 
$$
\alpha_{y^{-1}}\!\big[\o(yz^{-1},zx^{-1})\big]\,\alpha_{z^{-1}}\!\big[\o(zy^{-1},yx^{-1})\big]=\alpha_{z^{-1}}\!\big[\o(zy^{-1},yz^{-1})\big]\,.
$$
This one holds, taking in (\ref{ucraina}) $m=zy^{-1}$\,, $n=yz^{-1}$\,, $r=zx^{-1}$ and $s=z$ and applying (\ref{flandra}).
\end{proof}

Clearly, each $\mathscr K_{\alpha,\o}^\L(\G\times\G;\A)$ can be seen as a $^*$-subalgebra of the Banach $^*$-algebra $\mathscr K_{\alpha,\o}(\G\times\G;\A)$\,, the one corresponding to the maximal choice $\L(\G):=\ell^1(\G)$\,. The correspondence $\L(\G)\mapsto\mathscr K_{\alpha,\o}^\L(\G\times\G;\A)$ is increasing (obvious meaning: smaller spaces also have stronger norms). One has relations as 
$$
\mathscr K_{\alpha,\o}^{\L_1}(\G\times\G;\A)\bullet_{\alpha,\o}\mathscr K_{\alpha,\o}^{\L_2}(\G\times\G;\A)\subset\mathscr K_{\alpha,\o}^{\L_1\ast\L_2}(\G\times\G;\A)\,;
$$
thus if $\L_1(\G)$ is an ideal of $\L_2(\G)$ under convolution, then $\mathscr K_{\alpha,\o}^{\L_1}(\G\times\G;\A)$ is an ideal of $\mathscr K_{\alpha,\o}^{\L_2}(\G\times\G;\A)$ under the composition $\bullet_{\alpha,\o}$\,.

\medskip
We connect now the Banach $^*$-algebras $\mathscr K_{\alpha,\o}^{\L}(\G\times\G;\A)$ with the twisted crossed products.

\begin{Proposition}\label{finlanda}
Let us define $(\Gamma f)(x,y):=\alpha_{x^{-1}}\!\big[f(xy^{-1})\big]$\,. 
\begin{enumerate}
\item
Then $\Gamma:\L_{\alpha,\o}(\G;\A)\rightarrow\mathscr K^\L_{\alpha,\o}(\G\times\G;\A)$ is an isometric $^*$-morphism.
\item
The range of $\,\Gamma$ is the space of covariant kernels
\begin{equation}\label{lilican}
\begin{aligned}
\mathscr K_{\alpha,\o}^{\L}&(\G\times\G;\A)_{\rm cov}:=\\
&\big\{K\in\mathscr K_{\alpha,\o}^{\L}(\G\times\G;\A)\,\mid\,K(xz,yz)=\alpha_{z^{-1}}[K(x,y)]\,,\ \forall\,x,y,z\in\G\big\}
\end{aligned}
\end{equation}
and the inverse $\Gamma^{-1}$ reads on $\mathscr K_{\alpha,\o}^{\L}(\G\times\G;\A)_{\rm cov}$
\begin{equation}
\big(\Gamma^{-1}K\big)(x):=\alpha_x\big[K(x,{\sf e})\big].
\end{equation}
\end{enumerate}
\end{Proposition}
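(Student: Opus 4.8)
The plan is to check by direct computation that $\Gamma$ is an isometric $^*$-morphism, and then to identify its range by exhibiting the explicit two-sided inverse. For the isometry I would appeal to Remark \ref{eritreea}: for $K=\Gamma f$ one has $\kappa(x)=\sup_{z\in\G}\p\!(\Gamma f)(z,x^{-1}z)\!\p_\A$, and since $z(x^{-1}z)^{-1}=x$ the value collapses to $(\Gamma f)(z,x^{-1}z)=\alpha_{z^{-1}}\!\big[f(x)\big]$. As each $\alpha_{z^{-1}}$ is an isometric automorphism, $\kappa(x)=\p\!f(x)\!\p_\A$ independently of $z$, whence $\p\!\Gamma f\!\p_{\K^\L}=\p\!\kappa\!\p_{\L}=\p\!f\!\p_{\L(\G;\A)}$. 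This simultaneously shows that $\Gamma f$ indeed lies in $\mathscr K^\L_{\alpha,\o}(\G\times\G;\A)$ and that $\Gamma$ is isometric.

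For the multiplicativity I would expand $\Gamma(f\diamond_{\alpha,\o}g)(x,y)=\alpha_{x^{-1}}\!\big[(f\diamond_{\alpha,\o}g)(xy^{-1})\big]$, distribute $\alpha_{x^{-1}}$ across the defining sum, and use that $\o$ is center-valued so that $\alpha_{x^{-1}}\circ\alpha_w=\alpha_{x^{-1}w}$. The substitution $w=xz^{-1}$ then rewrites the summand as $\alpha_{x^{-1}}\!\big[f(xz^{-1})\big]\,\alpha_{z^{-1}}\!\big[g(zy^{-1})\big]\,\alpha_{x^{-1}}\!\big[\o(xz^{-1},zy^{-1})\big]$, which is exactly the $z$-term of $(\Gamma f\bullet_{\alpha,\o}\Gamma g)(x,y)$. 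For the involution I would expand $\Gamma(f^{\diamond_{\alpha,\o}})(x,y)$, use $\alpha_{x^{-1}}\circ\alpha_{xy^{-1}}=\alpha_{y^{-1}}$ together with the fact that automorphisms commute with $^*$, obtaining $\alpha_{x^{-1}}\!\big[\o(xy^{-1},yx^{-1})^*\big]\,\alpha_{y^{-1}}\!\big[f(yx^{-1})\big]^*$, which matches $(\Gamma f)^{\bullet_{\alpha,\o}}(x,y)$ since $(\Gamma f)(y,x)=\alpha_{y^{-1}}\!\big[f(yx^{-1})\big]$. This completes part 1.

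For part 2 I would first establish covariance of the range: from $(xz)(yz)^{-1}=xy^{-1}$ one gets $(\Gamma f)(xz,yz)=\alpha_{z^{-1}x^{-1}}\!\big[f(xy^{-1})\big]=\alpha_{z^{-1}}\!\big[(\Gamma f)(x,y)\big]$, so $\Gamma f$ satisfies the covariance relation in (\ref{lilican}). Conversely, given a covariant $K$ I would put $f:=\Gamma^{-1}K$, $f(x)=\alpha_x\!\big[K(x,{\sf e})\big]$, and verify the two composites. The identity $(\Gamma^{-1}\Gamma f)(x)=\alpha_x\!\big[\alpha_{x^{-1}}[f(x)]\big]=f(x)$ is immediate. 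For $\Gamma\Gamma^{-1}K=K$ one computes $(\Gamma f)(x,y)=\alpha_{y^{-1}}\!\big[K(xy^{-1},{\sf e})\big]$ and reads off the covariance relation with $x\mapsto xy^{-1}$, $y\mapsto{\sf e}$, $z\mapsto y$, namely $K(x,y)=\alpha_{y^{-1}}\!\big[K(xy^{-1},{\sf e})\big]$. Finally, to see that $f$ really lies in $\L_{\alpha,\o}(\G;\A)$, note $\p\!f(x)\!\p_\A=\p\!K(x,{\sf e})\!\p_\A\le\kappa(x)$ (take $z=x$ in Remark \ref{eritreea}), and since $\kappa\in\L(\G)$ and $\L(\G)$ is solid, $\p\!f(\cdot)\!\p_\A\in\L(\G)$.

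All of these are routine verifications once the correct change of variable $w=xz^{-1}$ is in hand; the only delicate point is the cocycle bookkeeping. The main (mild) obstacle is to track precisely which automorphism is applied to each $\o$-factor and to confirm that, after the substitution, the weight $\alpha_{x^{-1}}\!\big[\o(xz^{-1},zy^{-1})\big]$ produced by $\diamond_{\alpha,\o}$ coincides verbatim with the weight built into $\bullet_{\alpha,\o}$. It is exactly the center-valued hypothesis on $\o$ that renders $\alpha$ a genuine homomorphism and lets these factors combine without the extra ${\sf ad}_\o$ corrections of (\ref{luxemburg}); this is what keeps the computation clean.
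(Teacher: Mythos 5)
Your proposal is correct and follows essentially the same route as the paper: the multiplicativity and involution computations (distributing $\alpha_{x^{-1}}$, substituting $z=a^{-1}x$, and using that center-valued $\o$ makes $\alpha$ a genuine homomorphism) coincide with the paper's, while your isometry argument via Remark \ref{eritreea} is a trivial repackaging of the paper's direct manipulation of the infimum in (\ref{ingusia}). Part 2, which the paper dismisses as ``quite straightforward,'' is verified by you explicitly and correctly, including the solidity argument guaranteeing $\Gamma^{-1}K\in\L_{\alpha,\o}(\G;\A)$.
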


\begin{proof}
1. The product: one has
$$
\begin{aligned}
\left[\Gamma(f\diamond_{\alpha,\o}\!g)\right]\!(x,y)&=\alpha_{x^{-1}}\!\big[(f\diamond_{\alpha,\o}\!g)(xy^{-1})\big]\\
&=\sum_{a\in\G}\alpha_{x^{-1}}\!\big[f(a)\big]\,\alpha_{x^{-1}a}\!\big[g(a^{-1}xy^{-1})\big]\,\alpha_{x^{-1}}\!\big[\o(a,a^{-1}xy^{-1})\big]\\
&=\sum_{z\in\G}\alpha_{x^{-1}}\!\big[f(xz^{-1})\big]\,\alpha_{z^{-1}}\!\big[g(zy^{-1})\big]\,\alpha_{x^{-1}}\!\big[\o(xz^{-1},zy^{-1})\big]\\
&=\sum_{z\in\G}(\Gamma f)(x,z)\,(\Gamma g)(z,y)\,\alpha_{x^{-1}}\!\big[\o(xz^{-1},zy^{-1})\big]\\
&=\left[(\Gamma f)\bullet_{\alpha,\o}\!(\Gamma g)\right]\!(x,y)\,.
\end{aligned}
$$
The involution: one has
$$
\begin{aligned}
(\Gamma f)^{\bullet_{\alpha,\o}}(x,y)&=\alpha_{x^{-1}}\!\big[\o(xy^{-1},yx^{-1})^*\big]\,(\Gamma f)(y,x)^*\\
&=\alpha_{x^{-1}}\!\big[\o(xy^{-1},yx^{-1})^*\big]\,\alpha_{y^{-1}}\!\big[f(yx^{-1})^*\big]\\
&=\alpha_{x^{-1}}\!\big[f^{\diamond_{\alpha,\o}}(xy^{-1})\big]=\left[\Gamma(f^{\diamond_{\alpha,\o}})\right]\!(x,y)\,.
\end{aligned}
$$
To prove that $\Gamma$ is isometric one writes
$$
\begin{aligned}
\p\!\Gamma(f)\!\p_{\mathcal K^\L}&=\inf\left\{\p\!k\!\p_{\L(\G)}\,\mid\, \p\! \alpha_{x^{-1}}\!\big[f(xy^{-1})\big]\!\p_\A\,\le\,|k(xy^{-1})|\,,\ \,\forall\,x,y\in\G\right\}\\
&=\inf\left\{\p\!k\!\p_{\L(\G)}\,\mid\, \p\!| f(z)\!\p_\A\,\le\,|k(z)|\,,\ \,\forall\,x,z\in\G\right\}\\
&=\big\Vert\p\!f(\cdot)\!\p_\A\big\Vert_{\L(\G)}=\,\p\!f\!\p_{\L\!(\G;\A)}.
\end{aligned}
$$

2. is quite straightforward.
\end{proof}

\begin{Corollary}\label{terminal}
The Banach $^*$-algebra $\mathscr K_{\alpha,\o}^{\L}(\G\times\G;\A)_{\rm cov}$ is symmetric.
\end{Corollary}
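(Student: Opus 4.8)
The plan is to transport the symmetry already established for twisted crossed products across the isometric $^*$-isomorphism furnished by Proposition \ref{finlanda}. The essential observation is that $\mathscr K_{\alpha,\o}^{\L}(\G\times\G;\A)_{\rm cov}$ is merely a reincarnation of $\L_{\alpha,\o}(\G;\A)$: by Proposition \ref{finlanda} the map $\Gamma$, given by $(\Gamma f)(x,y):=\alpha_{x^{-1}}\!\big[f(xy^{-1})\big]$, is an isometric $^*$-morphism whose range is exactly the covariant part $\mathscr K_{\alpha,\o}^{\L}(\G\times\G;\A)_{\rm cov}$, with explicit inverse $(\Gamma^{-1}K)(x)=\alpha_x\big[K(x,{\sf e})\big]$. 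Hence $\Gamma$ is an isometric $^*$-isomorphism between the two Banach $^*$-algebras, and I would use it as the only real ingredient.

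First I would invoke the elementary fact that symmetry is preserved under $^*$-isomorphisms of Banach $^*$-algebras. This is immediate from the definition: symmetry is phrased through the spectrum of elements of the form $B^{\bullet_{\alpha,\o}}\!\bullet_{\alpha,\o}\!B$ (equivalently, through the reality of the spectrum of self-adjoint elements), and the spectrum of an element of a Banach algebra — computed, if necessary, in the unitization — is invariant under any $^*$-isomorphism, since such a map carries the algebraic and the involutive structure bijectively onto its image. Consequently $\mathscr K_{\alpha,\o}^{\L}(\G\times\G;\A)_{\rm cov}$ is symmetric if and only if $\L_{\alpha,\o}(\G;\A)$ is.

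Then I would appeal to Theorem \ref{monaco}(1): under the standing hypothesis that $\G$ is rigidly $\L$-symmetric, the twisted crossed product $\L_{\alpha,\o}(\G;\A)$ is a symmetric Banach $^*$-algebra. Combining this with the transport of symmetry through $\Gamma$ yields the claim at once.

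There is no genuine obstacle here, since all the analytic and cohomological work has already been discharged in Proposition \ref{finlanda} (the verification that $\Gamma$ respects $\bullet_{\alpha,\o}$, the involution, and the norm) and in Theorem \ref{monaco}. The only points deserving a word of care are, first, to make explicit the running assumption that $\G$ is rigidly $\L$-symmetric — without it neither $\L_{\alpha,\o}(\G;\A)$ nor its kernel avatar can be expected to be symmetric — and, second, to note that symmetry, being a purely spectral and algebraic notion, passes intact across the isometric $^*$-isomorphism $\Gamma$, so that no new estimate is required.
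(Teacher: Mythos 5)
Your proposal is correct and is exactly the paper's argument: the paper also proves this corollary by combining Theorem \ref{monaco} (symmetry of $\L_{\alpha,\o}(\G;\A)$ under the rigid $\L$-symmetry hypothesis) with Proposition \ref{finlanda} (the isometric $^*$-isomorphism $\Gamma$ onto the covariant kernels), transporting symmetry across $\Gamma$. Your explicit remarks on the spectral invariance under $^*$-isomorphisms and the need for the rigid $\L$-symmetry hypothesis simply spell out what the paper leaves implicit.
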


\begin{proof}
This follows from Theorem \ref{monaco} and Proposition \ref{finlanda}.
\end{proof}

\subsection{Algebras of twisted operators}\label{fourt}

The main motivation for introducing the Banach $^*$-algebra $\mathscr K_{\alpha,\o}^{\L}(\G\times\G;\A)$ consists in its connection with a twisted version of matrix operators. 

Let $\pi:\A\to\mathbb B(\H)$ be a $^*$-representation of the $C^*$-algebra $\A$ in a Hilbert space $\H$\,. One represents $\mathscr K_{\alpha,\o}^{\L}(\G\times\G;\A)$ by twisted matrix operators in $\ell^2(\G;\H)$ through
\begin{equation}\label{ghana}
\big[{\sf Int}^\pi_\o (K)v\big](x):=\sum_{y\in\G}\pi\big[K(x,y)\big]\pi\big[\o(x^{-1},xy^{-1})\big]v(y)\,.
\end{equation}
The label "twisted" indicates the presence of the cohomological factor $\pi\big[\o(x^{-1},xy^{-1})\big]$\,. If $\o=1$ one recovers a natural notion of matrix operator with vector-valued kernel $\pi\circ K:\G\times\G\to\mathbb B(\H)$\,.

\begin{Proposition}\label{elvetia}
${\sf Int}^\pi_\o:\K_{\alpha,\o}^\L(\G\times\G;\A)\rightarrow\mathbb B\!\left[\ell^2(\G;\H)\right]$ is a contractive $^*$-representation. 
\end{Proposition}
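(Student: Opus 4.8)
The plan is to establish the three defining properties separately: contractivity (which also yields well-definedness and boundedness of each operator), multiplicativity, and compatibility with the involutions. Throughout write $M_K(x,y):=\pi\big[K(x,y)\big]\pi\big[\o(x^{-1},xy^{-1})\big]$ for the block entries of the matrix operator in (\ref{ghana}), and recall from Remark \ref{eritreea} the function $\kappa(x)=\sup_{z\in\G}\p\!K(z,x^{-1}z)\!\p_\A$, which belongs to $\L(\G)$ and satisfies $\p\!K\!\p_{\K^\L}=\p\!\kappa\!\p_{\L}$. A useful preliminary is the pointwise bound $\p\!K(x,y)\!\p_\A\le\kappa(xy^{-1})$, obtained by setting $s=xy^{-1}$, $z=x$ in the definition of $\kappa$ (so that $s^{-1}z=y$).

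For contractivity, note first that $\pi\big[\o(x^{-1},xy^{-1})\big]$ is unitary, since $\o$ takes values in $\mathcal U\mathcal M(\A)$ and $\pi$ is a $^*$-representation; hence $\p\!M_K(x,y)\!\p_{\mathbb B(\H)}=\p\!\pi[K(x,y)]\!\p\le\p\!K(x,y)\!\p_\A\le\kappa(xy^{-1})$. Then for $v\in\ell^2(\G;\H)$ one estimates pointwise
$$
\p\!\big[{\sf Int}^\pi_\o(K)v\big](x)\!\p_\H\,\le\,\sum_{y\in\G}\kappa(xy^{-1})\,\p\!v(y)\!\p_\H\,=\,\big(\kappa\star\p\!v(\cdot)\!\p_\H\big)(x)\,,
$$
in the notation of (\ref{libia}). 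Young's inequality together with $\p\!\kappa\!\p_{\ell^1}\le\p\!\kappa\!\p_{\L}$ (the $\L$-norm is stronger than the $\ell^1$-norm) gives $\p\!{\sf Int}^\pi_\o(K)v\!\p_{\ell^2(\G;\H)}\le\p\!\kappa\!\p_{\L}\p\!v\!\p_{\ell^2(\G;\H)}$. Thus ${\sf Int}^\pi_\o(K)$ is a bounded operator with $\p\!{\sf Int}^\pi_\o(K)\!\p\le\p\!K\!\p_{\K^\L}$, and the same absolutely convergent bound justifies all the interchanges of summation below.

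For multiplicativity I would expand both ${\sf Int}^\pi_\o(K)\,{\sf Int}^\pi_\o(L)$ and ${\sf Int}^\pi_\o(K\bullet_{\alpha,\o}\!L)$ as double sums over $y,z$ and compare their integrands for fixed $x,y,z$. Because $\o$ is center-valued, $\pi\big[\o(x^{-1},xy^{-1})\big]$ is central and commutes with $\pi[L(y,z)]$, so it may be moved to the right; using $\pi[K(x,y)]\pi[L(y,z)]=\pi[K(x,y)L(y,z)]$ and that $\pi$ extends multiplicatively to $\mathcal M(\A)$, the equality of the two expressions reduces to the central identity
$$
\o(x^{-1},xy^{-1})\,\o(y^{-1},yz^{-1})=\alpha_{x^{-1}}\!\big[\o(xy^{-1},yz^{-1})\big]\,\o(x^{-1},xz^{-1})\,,
$$
which is precisely the cocycle relation (\ref{lichtenstein}) specialized to $a=x^{-1}$, $b=xy^{-1}$, $c=yz^{-1}$. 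For the involution, I would compute the entries of the adjoint, $M_K(y,x)^*=\pi\big[\o(y^{-1},yx^{-1})^*\big]\pi\big[K(y,x)^*\big]$, and the entries $\pi\big[K^{\bullet_{\alpha,\o}}(x,y)\big]\pi\big[\o(x^{-1},xy^{-1})\big]$ of ${\sf Int}^\pi_\o\big(K^{\bullet_{\alpha,\o}}\big)$ obtained from (\ref{norvegiaa}); again using centrality to commute $\pi\big[\o(x^{-1},xy^{-1})\big]$ past $\pi[K(y,x)^*]$, the two coincide once
$$
\o(x^{-1},xy^{-1})\,\o(y^{-1},yx^{-1})=\alpha_{x^{-1}}\!\big[\o(xy^{-1},yx^{-1})\big]
$$
is known, and this is (\ref{lichtenstein}) with $a=x^{-1}$, $b=xy^{-1}$, $c=yx^{-1}$ combined with the normalization (\ref{flandra}) (here $bc={\sf e}$, so $\o(x^{-1},{\sf e})=1$). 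These are exactly the two-cocycle manipulations already carried out in the proof of Proposition \ref{osetia}.

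I expect the only genuinely analytic point to be the boundedness and convergence, settled by the Schur--Young estimate above via Remark \ref{eritreea}; the homomorphism and $^*$-properties are purely formal consequences of (\ref{lichtenstein}), (\ref{flandra}) and the standing centrality assumption on $\o$, which is what allows the cohomological factors to be shuffled freely past the representation $\pi$.
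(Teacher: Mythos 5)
Your proof is correct, and its algebraic half is essentially the paper's own computation: both arguments reduce multiplicativity and the $^*$-property to the two specializations of the cocycle identity (\ref{lichtenstein}) combined with the normalization (\ref{flandra}) and the standing centrality assumption on $\o$; your matrix-entry bookkeeping for the adjoint is just a repackaging of the paper's inner-product computation. Where you genuinely diverge is the analytic side. The paper disposes of contractivity abstractly: it reduces to $\L(\G)=\ell^1(\G)$ and invokes the standard fact that any $^*$-representation of a Banach $^*$-algebra on a Hilbert space is automatically contractive, so only the formal $^*$-homomorphism identities need to be checked. You instead prove the norm bound by hand via the Schur--Young estimate $\p\!\big[{\sf Int}^\pi_\o(K)v\big](x)\!\p_\H\le\big(\kappa\star\p\!v(\cdot)\!\p_\H\big)(x)$ built on Remark \ref{eritreea}. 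Your route is more self-contained and makes explicit the boundedness of each operator ${\sf Int}^\pi_\o(K)$ (which the paper leaves implicit in the word ``$^*$-representation''), and the same absolutely convergent bound licenses the Fubini interchanges; the paper's route is shorter and avoids any norm comparison between $\L(\G)$ and $\ell^1(\G)$.

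One point in your argument deserves tightening: the inequality $\p\!\kappa\!\p_{\ell^1}\le\p\!\kappa\!\p_{\L}$ does not follow merely from the statement that the $\L$-norm is ``stronger'' than the $\ell^1$-norm; in the weakest reading of Definition \ref{maroc} this only yields $\p\!\kappa\!\p_{\ell^1}\le C\,\p\!\kappa\!\p_{\L}$ for some constant $C$, which gives boundedness of ${\sf Int}^\pi_\o$ but not contractivity. The inequality with $C=1$ is nevertheless true for every admissible algebra: the augmentation functional $\phi(k):=\sum_{x\in\G}k(x)$ is a character of the unital Banach algebra $\L(\G)$, hence $|\phi(k)|\le\p\!k\!\p_{\L}$, and applying this to $|\kappa|$ and using solidity gives $\p\!\kappa\!\p_{\ell^1}=\phi(|\kappa|)\le\p\!|\kappa|\!\p_{\L}=\p\!\kappa\!\p_{\L}$. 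Alternatively, once your estimate shows that ${\sf Int}^\pi_\o$ is a $^*$-representation by bounded operators, exact contractivity follows from the paper's abstract argument applied to the Banach $^*$-algebra $\K_{\alpha,\o}^\L(\G\times\G;\A)$ itself.
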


\begin{proof}
Of course, it is enough to treat the case $\L(\G)=\ell^1(\G)$\,. In addition, it is known that any $^*$-representation of a Banach $^*$-algebra is contractive.

Simple computations show that ${\sf Int}^\pi_\o(K)\,{\sf Int}^\pi_\o(L)={\sf Int}^\pi_\o(K\bullet_{\alpha,\o}\!L)$ and ${\sf Int}^\pi_\o(K)^*={\sf Int}^\pi_\o\!\left(K^{\bullet_{\alpha,\o}}\right)$\,. Let us sketch them. For the product one has
$$
\begin{aligned}
\big[{\sf Int}_\lambda(K)&\,{\sf Int}_\lambda(L)u\big](x)\\
&=\sum_{y\in\G}\sum_{z\in\G}\pi\big[K(x,z)\big]\pi\big[L(z,y)\big]\pi\big[\o(x^{-1},xz^{-1})\big]\pi\big[\o(z^{-1},zy^{-1})\big]u(y)\\
&=\sum_{y\in\G}\pi\!\left\{\sum_{z\in\G}K(x,z)L(z,y)\alpha_{x^{-1}}\!\big[\o(xz^{-1},zy^{-1})\big]\right\}\pi\big[\o(x^{-1};xy^{-1})\big]u(y)\\
&=\sum_{y\in\G}\pi\big[(K\bullet_{\alpha,\o}\!L)(x,y)\big]\pi\big[\o(x^{-1};xy^{-1})\big]u(y)\\
&=\left[{\sf Int}^\pi_\o(K\bullet_{\alpha,\o}\!L)u\right]\!(x)\,.
\end{aligned}
$$
The form of the adjoint is computed bellow:
$$
\begin{aligned}
\big\<{\sf Int}^\pi_\o&(K)u,v\big\>_{\ell^2(\G;\H)}=\sum_{y\in\G}\left\<\left[{\sf Int}^\pi_\o(K)u\right]\!(y),v(y)\right\>_{\H}\\
&=\sum_{y\in\G}\left\<\sum_{x\in\G}\pi\big[K(y,x)\big]\pi\big[\o(y^{-1}\!,yx^{-1})\big]u(x),v(y)\right\>_{\!\H}\\
&=\sum_{y\in\G}\sum_{x\in\G}\left\<u(x),\pi\big[\o(y^{-1}\!,yx^{-1})\big]^*\pi\big[K(y,x)\big]^*v(y)\right\>_{\!\H}\\
&=\sum_{x\in\G}\left\<u(x),\sum_{y\in\G}\pi\left\{\alpha_{x^{-1}}\!\big[\o(xy^{-1}\!,yx^{-1})^*\big]K(y,x)^*\right\}\pi\big[\o(x^{-1}\!,xy^{-1})\big]v(y)\right\>_{\!\H}\\
&=\sum_{x\in\G}\big\<u(x),\big[{\sf Int}^\pi_\o(K^{\bullet_{\alpha,\o}})v\big](x)\big\>_\H\\
&=\big\<u,{\sf Int}^\pi_\o(K^{\bullet_{\alpha,\o}})v\big\>_{\ell^2(\G;\H)}.
\end{aligned}
$$
For the forth equality we used the $2$-cocycle identity and the fact that $\o$ is central-valued.
\end{proof}

For the same $\pi$\,, the integrated form ${\sf ind}^\pi_\o:=r^\pi\rtimes L^\pi_\o$ described in (\ref{dobrogea}) and corresponding to the covariant representation given in (\ref{fleasca}) and (\ref{pleasca}) reads on $\ell^1(\G;\A)$
\begin{equation*}
\begin{aligned}
\big[{\sf ind}^\pi_\o (f)v\big](x)&=\sum_{z\in\G}\pi\big\{\alpha_{x^{-1}}[f(z)]\big\}\pi\big[\o(x^{-1},z)\big]v(z^{-1}x)\\
&=\sum_{y\in\G}\pi\big\{\alpha_{x^{-1}}[f(xy^{-1})]\big\}\pi\big[\o(x^{-1},xy^{-1})\big]v(y)\,.
\end{aligned}
\end{equation*}
Comparing this with (\ref{ghana}) one concludes that
\begin{equation}
{\sf Int}^\pi_\o\circ\Gamma={\sf ind}^\pi_\o\,.
\end{equation}

\begin{Remark}\label{togo}
{\rm If $z\in\G$\,, then $\pi\circ\alpha_{z^{-1}}=:\pi_z$ is a new $^*$-representation of $\A$\,. Thus we can construct all the representations $\,r^{\pi_z},L^{\pi_z}_\o,{\sf ind}^{\pi_z}_\o$ and ${\sf Int}_\o^{\pi_z}$\,. Defining
$$
R:\ell^2(\G;\H)\to \ell^2(\G;\H)\,,\quad (R v)(x):=\pi[\o(z^{-1},x^{-1})]v(xz)
$$
one gets easily the relations $\,r^{\pi_z}\!\overset{R}{\sim} r^{\pi}$\,, $L^{\pi_z}\!\overset{R}{\sim} L^{\pi}$ and (consequently) ${\sf ind}^{\pi_z}\!\overset{R}{\sim} {\sf ind}^{\pi}$\,, in which the notation means unitary equivalence. For instance:
$$
\begin{aligned}
\big[R\,r^\pi(\varphi)v\big](x)&=\pi[\o(z^{-1}\!,x^{-1})]\pi\big[\alpha_{(xz)^{-1}}(\varphi)]\big]v(xz)\\
&=\pi\big\{\alpha_{z^{-1}}\!\big[\alpha_{x^{-1}}(\varphi)]\big\}\pi[\o(z^{-1}\!,x^{-1})]v(xz)\\
&=\big[r^{\pi_z}(\varphi)R v\big](x)
\end{aligned}
$$
and
$$
\begin{aligned}
\big[R\,L^\pi(y) v\big](x)&=\pi[\o(z^{-1},x^{-1})]\big[L^\pi(y) v\big](xz)\\
&=\pi\big[\o(z^{-1},x^{-1})\big]\pi[\o(z^{-1}x^{-1},y)]v(y^{-1}xz)\\
&=\pi\big\{\alpha_{z^{-1}}[\o(x^{-1}\!,y)]\big\}\pi[\o(z^{-1}\!,x^{-1}y)]v(y^{-1}xz)\\
&=\pi_z\big[\o(x^{-1}\!,y)\big](R v)(y^{-1}x)\\
&=\big[L^{\pi_z}\!(y) R v\big](x)\,.
\end{aligned}
$$
The unitary equivalence ${\sf Int}_\o^{\pi_z}\!\overset{R}{\sim} {\sf Int}_\o^{\pi}$ only holds when restricted to the subspace $\mathscr K_{\alpha,\o}^{\L}(\G\times\G;\A)_{\rm cov}$\,.
}
\end{Remark}

\subsection{Twisted actions on Abelian $C^*$-algebras}\label{fuurt}

We assume now that the $C^*$-algebra $\A$ (unital, for simplicity) is Abelian. By Gelfand theory, it is enough to take it of the form $\,C(\Si):=\{\varphi:\Si\to\mathbb C\,\mid\,\varphi\ {\rm is \ continuous}\}$ for some Hausdorff compact topological space $\Si$ (homeomorphic to the Gelfand spectrum of $\A$)\,. Then the action $\alpha$ is derived from a continuous action of $\G$ by homeomorphisms $\{\si\mapsto x\cdot\si\mid x\in\G\}$ of $\Si$ by 
$$
\big[\alpha_x(\varphi)\big](\si):=\varphi\big(x^{-1}\!\cdot\si\big)\,.
$$ 
If $f\in \ell^1\big(\G;C(\Si)\big)$\,, $x\in\G$ and $\si\in\Si$ we are going to use the notation $f(x;\si)$ instead of $[f(x)](\si)$\,. In the same vein, the $C(\Si)$-valued kernels and the $2$-cocycle will be regarded as functions of three variables; for instance
$$
\o:\G\times\G\times\Si\to\mathbb T:=\{\zeta\in\mathbb C\,\mid\,|\zeta|=1\}\,,\quad\o(x,y;\si):=[\o(x,y)](\si)\,.
$$
The algebraic structure on $\L\big(\G;C(\Si)\big)$ becomes
\begin{equation}\label{shetland}
(f\diamond_{\alpha,\o}\!g)(x;\si):=\sum_{y\in\G}f(y;\si)g(y^{-1}x;y^{-1}\!\cdot\si)\,\o(y,y^{-1}x;\si)\,,
\end{equation}
\begin{equation}\label{noilehebride}
f^{\diamond_{\alpha,\o}}(x;\si):=\overline{\o(x,x^{-1};\si)}\,\overline{f(x^{-1};x^{-1}\!\cdot\si)}
\end{equation}
and that on $\K_{\alpha,\o}^\L\big(\G\times\G;C(\Si)\big)$ reads
\begin{equation}\label{suediaca}
(K\bullet_{\alpha,\o}\!L)(x,y;\si):=\sum_{z\in\G}K(x,z;\si)L(z,y;\si)\,\o(xz^{-1},zy^{-1};x^{-1}\!\cdot\si)\,,
\end{equation}
\begin{equation}\label{norvegiaca}
K^{\bullet_{\alpha,\o}}(x,y;\si):=\overline{\o(xy^{-1},yx^{-1};x^{-1}\!\cdot\si)}\,\overline{K(y,x;\si)}\,.
\end{equation}

We are going to indicate now two types of Hilbert space representations for twisted crossed products or for algebras of kernels, using Remark \ref{ulei} as a starting point.

\medskip
{\bf A.} Let us fix  some point $\si_0$ of $\Si$ with orbit $\mathcal O^{\si_0}:=\alpha_\G(\si_0)$ and quasi-orbit $\mathcal Q^{\si_0}:=\overline{\mathcal O^{\si_0}}$\,. The map $\alpha^{\si_0}:\G\rightarrow\Si$ given by $\alpha^{\si_0}(x):=\alpha_x(\si_0)$ is continuous and its range coincides with $\mathcal O^{\si_0}$, so this range is dense in $\mathcal Q^{\si_0}$. Taking $\H:=\mathbb C$\,, we set
\begin{equation}\label{swaziland}
\pi\equiv\delta_{\si_0}:C(\Si)\to\mathbb B(\mathbb C)=\mathbb C\,,\quad\delta_{\si_0}(\varphi):=\varphi(\si_0)\,.
\end{equation}
This leads as in Remark \ref{ulei} to the covariant representation $\big(r^{\si_0},L^{\si_0}_\o,\ell^2(\G;\mathbb C)\equiv\ell^2(\G)\big)$\,, where
$r^{\si_0}(\varphi)$ is the operator of multiplication by the function $\varphi\circ\alpha^{\si_0}$ for every $\varphi\in C(\Si)$ and 
$$
\big[L^{\si_0}_\o(y)v\big](x)=\o(x^{-1}\!,y;\si_0)v(y^{-1}x)\,.
$$
Thus, for every admissible algebra $\L(\G)$\,, one gets $^*$-representations
\begin{equation*}
{\sf ind}^{\si_0}_\o:\L\big(\G;C(\Si)\big)\to\mathbb B\big[\ell^2(\G)\big]\,,
\end{equation*}
\begin{equation}\label{vampirel}
\big[{\sf ind}^{\si_0}_\o\!(f)v\big](x)=\sum_{y\in\G}f\big(xy^{-1};x\cdot\si_0\big)\,\o\big(x^{-1}\!,xy^{-1};\si_0\big)v(y)
\end{equation}
and
\begin{equation*}
{\sf Int}^{\si_0}_\o:\mathscr K^\L_{\alpha,\o}\big(\G\times\G;C(\Si))\to\mathbb B\big[\ell^2(\G)\big]\,,
\end{equation*}
\begin{equation}\label{vampiras}
\big[{\sf Int}^{\si_0}_\o\!(K)v\big](x)=\sum_{y\in\G}K(x,y;\si_0)\,\o\big(x^{-1}\!,xy^{-1};\si_0\big)v(y)\,.
\end{equation}

\begin{Remark}\label{sahara}
{\rm The $^*$-representation ${\sf ind}^{\si_0}_\o$ above only depends on the orbit, up to unitary equivalence. If $\si_0,\si_1$ are on the same orbit and $z$ is an element of the group for which $\alpha_z(\si_0)=\si_1$, then the unitary operator
$$
R:\ell^2(\G)\to \ell^2(\G)\,,\quad (Rv)(x):=\o(z^{-1}\!,x^{-1};\si_0)v(xz)
$$
implements the equivalence: one has $\,R\,{\sf ind}_\o^{\si_0}\!(f)={\sf ind}_\o^{\si_1}\!(f)R\,$ for every $f\in \L\big(\G;C(\Si)\big)$\,. It is also true that $\,R\,{\sf Int}^{\si_0}_\o\!(K)={\sf Int}^{\si_1}_\o\!(K)R\,$ if $K$ is covariant, but on general kernels the connection fails. All these statements follow from Remark \ref{togo}.
}
\end{Remark}

{\bf B.} Assume now that $\mu$ is a Borel measure on $\Si$\,, invariant under the action $\alpha$\,. Then one has a $^*$-representation $\xi$ of $\A$ in the Hilbert space $\mathcal K:=L^2(\Si;\mu)$ given by 
\begin{equation}\label{bohr}
[\xi(\varphi)w_0](\si):=\varphi(\si)w_0(\si)\,.
\end{equation}
It admits an amplification $r^\xi$ representing $C(\Si)$ in $\ell^2(\G;\mathcal K)\cong \ell^2(\G)\otimes L^2(\Si)\cong L^2(\G\times\Si)$ by
$$
[r^\xi(\varphi)w](x;\si):=\big(\xi[\alpha_{x^{-1}}(\varphi)]w(x)\big)(\si)=\varphi(x\cdot\si)\,w(x;\si)\,.
$$
One also sets
$$
L^\xi_\o:\G\to\mathcal U\big[L^2(\G\times\Si)\big]\,,\quad \big[L^\xi_\o(y) w\big](x;\si):=\o\big(x^{-1}\!,y;\si)w\big(y^{-1}x;\si\big)\,.
$$
Clearly, one has the direct integral decomposition
$$
{\sf ind}^\xi_\o:=r^\xi\rtimes L^\xi_\o=\int_\Si^\otimes\!{\sf ind}_\o^\si\,\d\mu(\si)\,\quad{\sf Int}^\xi_\o=\int_\Si^\otimes\!{\sf Int}_\o^\si\,\d\mu(\si)
$$
where, for example, the $^*$-representation ${\sf Int}^\xi_\o:\mathscr K^\L_{\alpha,\o}\big(\G\times\G;C(\Si))\to\mathbb B\big[L^2(\G\times\Si)\big]$ is given by
$$
\big(\big[{\sf Int}^\xi_\o (K)\big]w\big)(x;\si)=\sum_{y\in\G}K(x,y;\si)\,\o\big(x^{-1}\!,xy^{-1};\si\big)w(y;\si)\,.
$$

\subsection{The standard case}\label{firtoinog}

For elements $\varphi$ of the $C^*$-algebra $\ell^\infty(\G)$ one sets 
\begin{equation}\label{austria}
\left[\alpha_x(\varphi)\right]\!(y):=\varphi(x^{-1}y)\,,\quad\ \forall\,x,y\in\G\,.
\end{equation}
We consider a unital $C^*$-subalgebra $\A(\G)$ of $\ell^\infty(\G)$ which is invariant under the action $\alpha$\,. If the $\alpha$-$2$-cocycle $\o$ satisfies $\omega(x,y;\cdot)\in\A(\G)$ for any elements $x,y$\,, then by restriction we form the twisted $C^*$-dynamical system $(\A(\G),\alpha,\o)$ and all the framework above is available. 

One can define on $\mathscr K^{\L}_{\alpha,\o}(\G\times\G;\A(\G))$ the transformation $\Upsilon$ given by
\begin{equation}\label{aricioara}
{\sf K}(x,y)\equiv(\Upsilon K)(x,y):=K(x,y;{\sf e})\,.
\end{equation}
\begin{Definition}\label{pui}
The range of the map $\Upsilon$ will be denoted by $\mathfrak K^{\L,\A}_{\,\o}(\G\times\G)$\,;  it is composed of all the two-variable scalar-valued kernels ${\sf K}\equiv\Upsilon K:\G\times\G\rightarrow\mathbb C$ such that for some $k\in\L(\G)$
\begin{equation}
|{\sf K}(x,y)|\le|k(xy^{-1})|\,,\quad\forall\,x,y\in\G
\end{equation}
and the function $z\mapsto{\sf K}(xz,yz)$ belongs to $\A(\G)$ for all $x,y\in\G$\,. 
\end{Definition}

If $\L(\G)=\ell^1(\G)$\,, one uses the simpified notation $\mathfrak K^{\A}_{\,\o}(\G\times\G)$\,. 
Note that the condition imposed on $z\mapsto{\sf K}(xz,yz)$ in the definition of $\mathfrak K^{\L,\A}_{\,\o}(\G\times\G)$ becomes vacuous if $\A(\G)=\ell^\infty(\G)$\,.

\medskip
The linear surjection $\Upsilon$ is highly non-injective. However, on $\K_{\alpha,\o}^\L\big(\G\times\G;\A(\G)\big)_{\!\rm cov}$\,, due to the covariance condition (\ref{lilican}), it becomes injective and, together with Proposition \ref{finlanda}, yields isometric isomorphisms of Banach $^*$-algebras
\begin{equation}\label{aripat}
\L_{\alpha,\o}\big(\G;\A(\G)\big)\overset{\Gamma}{\longrightarrow}\K_{\alpha,\o}^\L\big(\G\times\G;\A(\G)\big)_{\!\rm cov}\overset{\Upsilon}{\longrightarrow}\mathfrak K^{\L,\A}_{\,\o}(\G\times\G)\,.
\end{equation}
By composing one gets 
\begin{equation}\label{puiut}
[(\Upsilon\circ\Gamma)f](x,y)=f(xy^{-1};x)
\end{equation} 
for every $f\in\L_{\alpha,\o}\big(\G;\A(\G)\big)$ (see also \cite[Prop. 3.6]{BB} for a related result).

\medskip
The relevant algebraic structure on $\mathfrak K^{\L,\A}_{\,\o}(\G\times\G)$ is
\begin{equation}\label{aripioara}
\big({\sf K}\bullet_{\o}\!{\sf L}\big)(x,y):=\sum_{z\in\G}{\sf K}(x,z){\sf L}(z,y)\,\o(xz^{-1},zy^{-1};x^{-1})\,,
\end{equation}
\begin{equation}\label{aripiaca}
{\sf K}^{\bullet_{\o}}(x,y):=\overline{\o(xy^{-1},yx^{-1};x^{-1})}\,\overline{{\sf K}(y,x)}
\end{equation}
and the norm reads, once again by the covariance condition,
\begin{equation}\label{aripia}
\p\!{\sf K}\!\p_{\mathfrak K^{\L,\A}}\,:=\inf\left\{\p\!k\!\p_{\L}\,\mid\, |{\sf K}(x,y)|\,\le\,|k(xy^{-1})|\,,\ \,\forall\,x,y\in\G\right\}.
\end{equation}

\begin{Remark}\label{iepuroi}
{\rm For $\,\o=1$\,, $\L(\G)=\ell^1(\G)$ and $\A(\G)=\ell^\infty(\G)$\,, one essentially gets {\it the Banach $^*$-algebra $CD(\G)$ of convolution-dominated matrices on the discrete group $\G$} as defined in \cite{FGL} (cf. also references therein).
}
\end{Remark}

\begin{Remark}\label{iepugas}
{\rm Along the lines of Section \ref{fuurt}, one defines $\Sigma$ to be the Gelfand spectrum of $\A(\G)$\,. As a nice simple example, suppose that $\A(\G)$ consists of all the almost periodic functions on the discrete group $\G$\,. Then $\Si$ is the Bohr compact group $\beta G$ associated to $\G$ and its Haar measure $\mu$ can be used to define the (generally non-separable) Besicovich-type space $L^2(\beta G;\mu)$\,, the representation (\ref{bohr}) and the induced representations it defines.}
\end{Remark}

\begin{Remark}\label{puy}
{\rm To make the connection with Section \ref{fuurt} stronger, we could assume that the space $c_0(\G)$ of all  the functions $\varphi:\G\mapsto\mathbb C$ converging to zero at infinity (an $\alpha$-invariant closed ideal of $\ell^\infty(\G)$) is contained in $\A(\G)$.
Then $\Sigma$ is homeomorphic (and will be identified) to a compactification of the discrete group $\G$ and $\alpha$ extends to an action of $\G$ by homeomorphisms of $\Si$\,. Thus $\G$ is a dense orbit in the compact dynamical system $(\Si,\alpha,\G)$ and, as mentioned in Remark \ref{sahara}, the covariant representations $(r^z,L_\o^z)$ defined by various points $z\in\G\subset\Si$ are unitarily equivalent. 
}
\end{Remark}

\medskip
As a direct consequence of (\ref{aripat}) and Corollary \ref{terminal} one gets

\begin{Corollary}\label{iepuras}
Let $\,\G$ be a rigidly $\L$-symmetric discrete group for some admissible algebra $\L(\G)$  and $\A(\G)$ a unital $C^*$-subalgebra of $\,\ell^\infty(\G)$ which is invariant under translations. Then the Banach $^*$-algebra $\mathfrak K^{\L,\A}_{\,\o}(\G\times\G)$ of twisted kernels is symmetric for every $2$-cocycle $\o:\G\times\G\rightarrow\A(\G)$\,.
\end{Corollary}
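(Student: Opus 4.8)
The plan is to deduce the symmetry of $\mathfrak K^{\L,\A}_{\,\o}(\G\times\G)$ from the results already assembled in the excerpt, rather than to prove it from scratch. The key observation is that the whole section has been engineered precisely so that this Corollary is a one-line consequence of the chain of isometric $^*$-isomorphisms recorded in \eqref{aripat} together with Corollary \ref{terminal}. So the proof is essentially a transport-of-structure argument: symmetry is a property of the isomorphism class of a Banach $^*$-algebra, and all the maps involved are isometric $^*$-isomorphisms.

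Concretely, first I would invoke the hypothesis that $\G$ is rigidly $\L$-symmetric, so that by Theorem \ref{monaco}(1) the twisted crossed product $\L_{\alpha,\o}\big(\G;\A(\G)\big)$ is a symmetric Banach $^*$-algebra for the twisted $C^*$-dynamical system $(\A(\G),\alpha,\o)$; here one uses that $\A(\G)$, being a unital $C^*$-subalgebra of $\ell^\infty(\G)$ invariant under translations and equipped with the action \eqref{austria}, together with any $2$-cocycle $\o:\G\times\G\to\A(\G)$, genuinely forms such a system. Then I would recall Proposition \ref{finlanda}, which supplies the isometric $^*$-isomorphism $\Gamma$ onto the covariant kernels $\K_{\alpha,\o}^\L\big(\G\times\G;\A(\G)\big)_{\!\rm cov}$, so that the latter is symmetric as well — this is exactly the content of Corollary \ref{terminal}.

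The final step is to carry symmetry across the second arrow in \eqref{aripat}. The map $\Upsilon$ of \eqref{aricioara}, although highly non-injective on the full kernel algebra, restricts on the covariant subalgebra to an isometric $^*$-isomorphism onto $\mathfrak K^{\L,\A}_{\,\o}(\G\times\G)$ with the structure \eqref{aripioara}, \eqref{aripiaca}, \eqref{aripia}. Since the composite $\Upsilon\circ\Gamma$ of \eqref{puiut} is then an isometric $^*$-isomorphism of $\L_{\alpha,\o}\big(\G;\A(\G)\big)$ onto $\mathfrak K^{\L,\A}_{\,\o}(\G\times\G)$, and symmetry (the spectrum of every self-adjoint element being real, equivalently the spectrum of every $B^*B$ being positive) is a purely spectral notion preserved under $^*$-isomorphism, the target algebra inherits symmetry directly.

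Since the heavy lifting is done by Theorem \ref{monaco} and Proposition \ref{finlanda}, there is no real obstacle in a computational sense; the only point requiring care is to confirm that the displayed chain \eqref{aripat} consists of honest isometric $^*$-isomorphisms of Banach $^*$-algebras and that $\mathfrak K^{\L,\A}_{\,\o}(\G\times\G)$ is the image of the covariant subalgebra under $\Upsilon$ rather than of the full algebra. I would therefore make explicit that it is the restriction of $\Upsilon$ to $\K_{\alpha,\o}^\L\big(\G\times\G;\A(\G)\big)_{\!\rm cov}$ that is bijective, invoking the covariance condition \eqref{lilican} exactly as stated in the remark preceding \eqref{aripat}, and then conclude that symmetry is transported along the isomorphisms.
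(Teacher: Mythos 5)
Your proposal is correct and coincides with the paper's own argument: the paper proves this Corollary precisely as ``a direct consequence of (\ref{aripat}) and Corollary \ref{terminal}'', i.e.\ by transporting the symmetry of $\L_{\alpha,\o}\big(\G;\A(\G)\big)$ (Theorem \ref{monaco}) through the isometric $^*$-isomorphisms $\Gamma$ and $\Upsilon\!\restriction_{\rm cov}$ onto $\mathfrak K^{\L,\A}_{\,\o}(\G\times\G)$. Your added care about $\Upsilon$ being bijective only on the covariant subalgebra, via the covariance condition (\ref{lilican}), is exactly the point the paper makes just before (\ref{aripat}).
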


Some consequences and examples can be inferred from Section \ref{trow}. In particular, Corollary \ref{japita} can be rephrased in terms of the symmetry of $\mathfrak K^{\ell^{1,\nu}\!,\A}_{\,\o}(\G\times\G)$ for weights $\nu$ satisfying conditions (\ref{ugrs}) and (\ref{magaoaie}) \,.

\medskip
In this standard case, besides the two types of Hilbert space representations {\bf A} and {\bf B} introduced in Section \ref{fuurt}, there is an interesting third one that we now present. It will be an ingredient of the proof of Corollary \ref{papita}.

\medskip
{\bf C.} One starts with the representation 
\begin{equation}\label{ienuparas}
\rho:\A(\G)\rightarrow\mathbb B\big[\ell^2(\G)\big]\,,\quad [\rho(\varphi)a_0](y):=\varphi(y)a_0(y)\,.
\end{equation}
In spite of its similarity with (\ref{bohr}), the representations $\xi$ and $\rho$ are different; very often the Hilbert space $L^2(\Si;\mu)$ is non-separable even if $\G$ is countable, as indicated in Remark \ref{iepugas}. 

We apply to $\rho$ the inducing procedure described in Remark \ref{ulei} (see also \cite[pag. 496-497]{FGL} for another point of view in the convolution-dominated case). Keeping in mind the identification $\ell^2(\G)\otimes\ell^2(\G)\cong\ell^2(\G\times\G)$\,, the asociated covariant representation is given by
\begin{equation}\label{pap}
[r^\rho(\varphi)a](x;z)=\varphi(xz)a(x;z)\,,\quad\big[L_\o^\rho(y)a\big](x;z)=\o(x^{-1},y;z)a(y^{-1}x;z)\,.
\end{equation}
The integrated form 
\begin{equation}\label{papas}
{\sf ind}^\rho_\o\equiv r^\rho\!\rtimes\!L^\rho_\o:\ell^1_{\alpha,\o}(\G;\A(\G))\rightarrow\mathbb B\!\left[\ell^2(\G\times\G)\right]
\end{equation} 
is a regular $^*$-representation. In terms of the representations associated to the points of the group $z\in\G$ and using the isomorphism $\ell^2(\G\times\G)\cong\bigoplus_{z\in\G}\ell^2(\G)$\,, one has the direct sum decompositions
\begin{equation}\label{papastratos}
{\sf ind}^\rho_\o\cong\bigoplus_{z\in\G}{\sf ind}^z_\o\,,\quad{\sf Int}^\rho_\o\cong\bigoplus_{z\in\G}{\sf Int}^z_\o\,.
\end{equation}

We return now to formula (\ref{vampiras}). In particular, taking $\si_0={\sf e}$ and setting $\lambda(x,y):=\o\big(x^{-1}\!,xy^{-1};{\sf e}\big)$ we rephrase it as
\begin{equation}\label{segesange}
\big[{\sf Int}_\lambda\!({\sf K})v\big](x):=\big[{\sf Int}^{\sf e}_\o\!({\sf K})v\big](x)=\sum_{y\in\G}{\sf K}(x,y)\lambda(x,y)v(y)\,,
\end{equation} 
which is seen as a $^*$-representation of the Banach $^*$-algebra $\mathfrak K^{\A}_{\,\o}(\G\times\G)$ by "twisted integral operators" in the Hilbert space $\ell^2(\G)$\,. Its range will be denoted by
\begin{equation}\label{puyut}
\mathbb I^{\A}_\lambda(\G):={\sf Int}_\lambda\!\left[\mathfrak K^{\A}_{\o}(\G\times\G)\right]={\sf ind}^{\sf e}_\o[\ell^1_{\alpha,\omega}(\G;\A(\G))]\,.
\end{equation} 

\begin{Corollary}\label{papita}
Let the group $\G$ be discrete, amenable and rigidly symmetric and suppose that the $2$-cocycle $\o$ is $\A(\G)$-valued for some unital $C^*$-subalgebra $\A(\G)$ of $\,\ell^\infty(\G)$ that is invariant under translations. Then $\,\mathbb I^{\A}_\lambda(\G)$  forms a $^*$-subalgebra of $\,\mathbb B\!\left[\ell^2(\G)\right]$ that is inverse closed and Frehdolm inverse closed. 
\end{Corollary}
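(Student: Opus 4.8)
The plan is to realize $\mathbb I^{\A}_\lambda(\G)$ as the image of $\ell^1_{\alpha,\o}(\G;\A(\G))$ under a single \emph{faithful} $^*$-representation of the twisted crossed product $\A(\G)\!\rtimes_\alpha^\o\!\G$, and then to invoke Theorem \ref{monaco} for inverse closedness and Corollary \ref{dorinel} for the Fredholm version. By (\ref{puyut}) one has $\mathbb I^{\A}_\lambda(\G)={\sf ind}^{\sf e}_\o\big[\ell^1_{\alpha,\o}(\G;\A(\G))\big]$; since ${\sf ind}^{\sf e}_\o$ is a $^*$-representation, its range is automatically a $^*$-subalgebra of $\mathbb B\big[\ell^2(\G)\big]$, which disposes of the first assertion. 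Everything else will hinge on showing that ${\sf ind}^{\sf e}_\o$ is faithful, and this is where amenability enters.

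To prove faithfulness I would argue as follows. In the standard case of Section \ref{firtoinog} the action of $\G$ on the spectrum $\Si$ of $\A(\G)$ is by left translations, so the orbit of the point ${\sf e}$ fills out all of $\G\subset\Si$; in particular every $z\in\G$ lies on the orbit of ${\sf e}$. Remark \ref{sahara} then supplies unitary equivalences ${\sf ind}^z_\o\overset{R}{\sim}{\sf ind}^{\sf e}_\o$ for each $z\in\G$, so the direct-sum decomposition (\ref{papastratos}) collapses to ${\sf ind}^\rho_\o\cong\bigoplus_{z\in\G}{\sf ind}^{\sf e}_\o$. Since a direct sum of copies of one representation shares its kernel, this gives $\ker{\sf ind}^{\sf e}_\o=\ker{\sf ind}^\rho_\o$. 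Now ${\sf ind}^\rho_\o$ is the regular $^*$-representation of $\A(\G)\!\rtimes_\alpha^\o\!\G$, and because $\G$ is amenable the full and reduced twisted crossed products coincide, so this regular representation is faithful; hence $\ker{\sf ind}^{\sf e}_\o=\{0\}$ as required.

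With faithfulness of $\Pi:={\sf ind}^{\sf e}_\o$ secured, inverse closedness of $\mathbb I^{\A}_\lambda(\G)$ in $\mathbb B\big[\ell^2(\G)\big]$ drops out of point 3 of Theorem \ref{monaco}. For Fredholm inverse closedness I would place myself in the framework of Example \ref{dorinet}, i.e. take $c_0(\G)\subseteq\A(\G)$ (which is exactly what makes $\mathbb K\big[\ell^2(\G)\big]$ sit inside $\mathbb I^{\A}_\lambda(\G)$, so that Definition \ref{burundi} applies) and set $\mathcal J:=c_0(\G)$; this $\alpha$-invariant ideal satisfies $\Pi\big[c_0(\G)\!\rtimes_\alpha^\o\!\G\big]=\mathbb K\big[\ell^2(\G)\big]$, so Corollary \ref{dorinel} immediately yields that $\mathbb I^{\A}_\lambda(\G)=\Pi\big[\ell^1_{\alpha,\o}(\G;\A(\G))\big]$ is Fredholm inverse closed.

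The hard part is the faithfulness of ${\sf ind}^{\sf e}_\o$: a representation attached to a single orbit is typically far from faithful, and what rescues the argument is the reduction (\ref{papastratos}) of the regular representation ${\sf ind}^\rho_\o$ to a direct sum of copies of ${\sf ind}^{\sf e}_\o$ together with amenability, which makes ${\sf ind}^\rho_\o$ faithful. Once this single point is established, the two closedness statements are direct citations of the machinery already assembled in Theorem \ref{monaco} and Corollary \ref{dorinel}.
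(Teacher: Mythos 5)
Your proof is correct and takes essentially the same route as the paper's own: faithfulness of ${\sf ind}^{\sf e}_\o$ via the decomposition (\ref{papastratos}), the mutual unitary equivalence of the representations ${\sf ind}^z_\o$ (Remark \ref{sahara}) and amenability of $\G$ making the regular representation ${\sf ind}^\rho_\o$ faithful, followed by citations of Theorem \ref{monaco} for inverse closedness and Corollary \ref{dorinel}/Example \ref{dorinet} for the Fredholm version. Your observation that the Fredholm assertion tacitly uses $c_0(\G)\subseteq\A(\G)$ (the standing hypothesis of Example \ref{dorinet}) is a fair reading of what the corollary's statement leaves implicit.
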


\begin{proof}
Any $^*$-representation of a Banach $^*$-algebra extends to a $^*$-representation of its enveloping $C^*$-algebra. In particular we get representations (also denoted by) ${\sf ind}^\rho_\o:\A(\G)\!\rtimes_\alpha^\o\!\G\rightarrow\mathbb B\big[\ell^2(\G\times\G)\big]$ and ${\sf ind}^z_\o:\A(\G)\!\rtimes_\alpha^\o\!\G\rightarrow\mathbb B\big[\ell^2(\G)\big]$\,; by (\ref{papastratos}) they satisfy ${\sf ind}^\rho_\o\cong\bigoplus_{z\in\G}{\sf ind}^z_\o$\,.

Recall that the representations $\big({\sf ind}^z_\o\big)_{z\in\G}$ are mutually unitarily equivalent (by Remark \ref{sahara}, for instance). Thus ${\sf ind}^\rho_\o$ and ${\sf ind}^z_\o$ are simultaneously faithful; this does happen if the group $\G$ is amenable \cite{PR1}. In particular both ${\sf ind}^{\sf e}_\o$ and ${\sf Int}_\lambda$ are faithful at the level of the respective enveloping $C^*$-algebras.

Then our result folows from Theorem \ref{monaco}, Corollary \ref{dorinel} and Example \ref{dorinet}.
\end{proof}

\bigskip
{\bf Acknowledgements:} The author has been supported by the Chilean Science Foundation {\it Fondecyt} under the Grant 1120300 and by N\'ucleo Milenio de F\'isica Matem\'atica RC120002. He is grateful to Professor D. Poguntke for useful criticism.


\bigskip
\bigskip
\bigskip
{\bf Address:}\\
Departamento de Matem\'aticas\\ 
Universidad de Chile\\
Las Palmeras 3425, Casilla 653\\ 
Santiago, Chile\\
\emph{E-mail:} Marius.Mantoiu@imar.ro

\end{document}